\documentclass[11pt]{amsart}
\usepackage[T1]{fontenc}
\usepackage{graphicx,epstopdf}
\usepackage{amssymb,url}
\usepackage{bm}
\usepackage{tikz}
\usetikzlibrary{matrix}
\usepackage{mathtools}
\usepackage{geometry}
\usepackage{bbm}
\usepackage{nccmath}
\usepackage{wrapfig}
\usetikzlibrary{shapes.misc}
\usepackage{smartdiagram}
\usepackage{fullpage}

\usepackage{float}
\usepackage{tikz}
\usepackage{wrapfig}
\usetikzlibrary{shapes.misc}
\usepackage{smartdiagram}
\let\oldxi\xi
\usesmartdiagramlibrary{additions}

\newcommand{\E}{\mathbb{E}}
\newcommand{\N}{\mathbb{N}}
\newcommand{\R}{\mathbb{R}}

\newcommand{\balpha}{\bm{\alpha}}
\newcommand{\oalpha}{|\balpha|}
\newcommand{\bbeta}{\bm{\beta}}
\newcommand{\obeta}{|\bm{\beta|}}
\newcommand{\bgamma}{\bm{\gamma}}
\newcommand{\ogamma}{|\bgamma|}

\newcommand{\A}{\bm{A}}
\newcommand{\B}{\bm{B}}

\newcommand{\x}{\bm{x}}
\newcommand{\bxi}{\bm{\xi}}
\newcommand{\presumneg}{\hspace{-4pt}}
\usepackage{amsthm}
\newtheorem{theorem}{Theorem}[section]

\newtheorem{lemma}{Lemma}[section]
\theoremstyle{definition}

\usepackage{boldline}
\usepackage{empheq}
\newtheorem{example}{Example}[section]
\newtheorem{definition}{Definition}[section]
\newcommand\one[1]{\mathop{\mathbbm{1}^{}_{#1}}} 
\newcommand{\rvline}{\hspace*{-\arraycolsep}\vline\hspace*{-\arraycolsep}}

\title{Triple Products of Correlated Multivariate Hermite Polynomials}
\author{Laura Lyman}

\begin{document}
\maketitle
\vspace{-25pt}
\begin{abstract}
Stochastic Galerkin schemes (SGS) are used to approximate solutions to differential equations with stochastic parameters. In particular, they express the solution with an infinite orthogonal functional basis of the parameter space and derive a deterministic system of PDEs; the solution of this deterministic system can be used to produce a {surrogate} approximation of the original stochastic equations. When the parameters are correlated, the infinite series representation of the solution does not provably converge for all joint distributions; further, classic decorrelating transformations (e.g., Roseblatt) often introduce known numerical instabilities. Rahman (2017) established convergence in probability and distribution for correlated {Gaussian} parameters when the basis functionals are {multivariate Hermite polynomials} (MHPs), where these polynomials are only {weakly orthogonal} with respect to the underlying joint density. Due to weak orthogonality, the $N$th moments of MHPs introduced during Galerkin projection do not trivially decompose into products of one-dimensional expectations and have no closed-form expression for $N > 2$; consequently, these moments are typically estimated via brute-force sampling and quickly become prohibitively expensive to compute. In this paper, we introduce and prove several foundational results for MHPs. These results culminate in an equation that can be used to calculate the $N = 3$ moments (triple products) exactly, whose implementation dramatically reduces the computational bottleneck otherwise inherent to SGS with correlated Gaussian parameters.

\end{abstract}
\medskip

\noindent \keywords{\textbf{Keywords:} Uncertainty quantification (UQ), stochastic Galerkin schemes (SGS), polynomial chaos, multivariate Hermite polynomials}

\section{Introduction}

This paper considers a subset of \emph{forward propagation problems}, in which the goal is to quantify the variability in some output \emph{quantity of interest} (QoI) given uncertain model parameters represented by random variables. The random variables are prescribed a joint probability density function, which is selected by the scientist or engineer based on physical knowledge of the problem. Using the notation in Figure \ref{fig_forward_prop}, we can imagine that the model functional $\mathcal{F}$ represents a  system of partial differential equations (PDEs) with solution $u$, where the uncertain input parameters appear in initial or boundary conditions. Accordingly, the response $u$ is a stochastic PDE solution, since it is a function of the random parameters. 

\begin{figure}[H]
\begingroup
\setlength{\intextsep}{0pt}%
\centerline{\scalebox{0.9}{Inverse Problem}} \smallskip
\centerline{
\smartdiagramset{module x sep= 3cm,
back arrow disabled = false,
module minimum width=2cm, module minimum height = 2cm, text width=2cm,
set color list={green!40}
}
\smartdiagramadd[flow diagram:horizontal]{Uncertain Parameter \\ $\bm{\oldxi}$, Model \\ Functional $\mathcal{F}$, Quantity of Interest (QoI) \\ ${{u}}$}{}
\begin{tikzpicture}[overlay]
        \draw[additional item arrow type,color=red!50] (module1) -- ++(0,-1.5) -| (module3);
    \end{tikzpicture}
}
\medskip
\vspace{5pt}
\centerline{\scalebox{0.9}{Forward Propagation}} \smallskip
\caption{Framework for uncertainty propagation problems.}\label{fig_forward_prop}
\endgroup
\vspace{-5pt}
\end{figure}


Several methods have been developed within the field of uncertainty quantification\footnote{The phrase \emph{uncertainty quantification} emerged within engineering communities to describe the interface between statistics and physics-related models with (often) expensive-to-evaluate simulations. As noted by Berger and Smith \cite{UQ_name_history}, ``In statistics, this name appears redundant, as the primary role of statistics is uncertainty quantification; in economics, the phrase is an oxymoron, as Knightian uncertainty is by definition unquantifiable." Nevertheless, UQ has become the standard term for this interface, so we follow the custom as~well.} (UQ) to tackle the forward propagation of uncertainty, especially in the context of solving stochastic systems in engineering applications \cite{constantine_thesis:master, Debusschere2017, Mani_paper, stoch_galer_2:master, GI_book, GI_Roe,schober2017probabilistic}. As illustrated in Figure \ref{fig_pillars_UQ}, these methods in UQ can be classified into three broad categories: sampling methods (Section \ref{stat_methods}), quadrature/stochastic collocation, and spectral Galerkin methods (Section \ref{sec_intro_PC}). While this paper does not discuss quadrature and stochastic collocation, there is a wealth of research and literature on the subject \cite{Gunzburger2017, KL_ref_from_navarro, Smith, xiu_stochcollsurvey}. 

\bigskip

 \begin{figure}[H]
{\smartdiagramset{module x sep= 3cm,
back arrow disabled = true,
module minimum width=2cm, module minimum height = 2cm, text width=2cm, uniform arrow color=true, arrow color=white, set color list={red!40} }
\setlength{\intextsep}{0pt}%
\bigskip
\bigskip
\vspace{20pt}
\centerline{
\scalebox{.8}{\smartdiagramset{border color=none, module x sep= 4cm, arrow line width = 0cm, circular final arrow disabled=true, module minimum width=3cm, module minimum height = 3cm, text width=3cm, arrow style=, arrow tip=,
additions={
additional item offset=0.85cm,
additional arrow tip=stealth,
additional arrow line width=5pt,
additional item bottom color=blue!10,
additional item text width = 2cm,
additional item width = 2.5cm,
additional arrow tip=to,
additional item height = 1.5 cm, additional item shadow=drop shadow
}}
\smartdiagramadd[flow diagram:horizontal]{Sampling Methods, Quadrature~\& Stochastic Collocation, Spectral Galerkin Methods(SGS)}
{%
below of module3/Polynomial Chaos%
}
\smartdiagramconnect{-}{module3/additional-module1}
}
}
\hspace{-430pt}
\begin{tikzpicture}[overlay]
        \draw[color=red!50, line width = 3pt] (4.375,3) -- (4.375,4) -- (10.625,4) -- (10.625, 3);
        \draw[color=red!50, line width = 3pt] (7.5,4) -- (7.5,3) ;
        \node at (7.5,4.5) {UQ Approaches};
   \end{tikzpicture}
}
\vspace{50pt}
\caption{The three pillars of UQ for analyzing forward propagation problems.}\label{fig_pillars_UQ}
\end{figure}

 \subsection{Sampling methods}\label{stat_methods}
 

 The sampling approach uses many realizations of an input random variable to determine distributional information about the output solution of interest. The most common example is a simple Monte Carlo (MC) simulation. The advantages of simple MC are (1) it is generally straight-forward to implement, and (2) its accuracy is not dependent on the dimensionality of the uncertain input parameter (but instead on the number of parameter realizations) \cite{sampling_ref, mcbook:master}. However, by the law of large numbers,  MC algorithms  suffer from a slow convergence rate proportional to the inverse square root of the number of samples \cite{ mypaper:master, GI_book, Smith}.  Accordingly, if a single sample evaluation of the output solution of interest is time consuming, and/or a large number of sample evaluations is required to converge within a desired accuracy level, MC simulations can become prohibitively expensive. Further, if the goal is to assess the behavior of the stochastic solution given low-probability events, sampling methods are not ideal, since they will require a large number of realizations \cite{xiu_thesis:master}. 
 
To accelerate convergence, several techniques have been developed, such as Latin Hypercube Sampling,  Quasi-Monte Carlo (QMC), and Markov Chain Monte Carlo (MCMC) \cite{MCMC_2, MCMC_1, Smith}; for an excellent overview, see \cite{mcbook:master}. These methods can improve the efficiency of the brute-force Monte Carlo method --- though additional restrictions are imposed based their specific designs, meaning their applicability may be limited in some contexts \cite{xiu_thesis:master}. 


 \subsection{Spectral Galerkin Schemes (SGS)}\label{sec_intro_PC}
For MC algorithms, if each sample evaluation is expensive --- as is often the case for a PDE solution underlying a large-scale simulation --- the slow convergence can make obtaining tens of thousands of evaluations computationally infeasible \cite{constantine_thesis:master, Feinberg, Navarro}. These computational constraints are worsened when the resolved model requires a high degree of accuracy. In such cases, stochastic Galerkin methods might be better suited to the problem at hand. 

Stochastic Galerkin schemes (SGS) are used to approximate the solution to parametrized differential equations. In particular, they utilize an infinite functional basis of the parameter space to express the solution and derive a \emph{deterministic} system of PDEs, which can then be solved with standard numerical techniques \cite{constantine_primer:master, UQHandbook_IntroUQ, Smith}. The deterministic coefficients in the functional series representation of the solution are sometimes called \emph{random modes}. Since Galerkin methods are based on projecting the residual onto the space of approximating polynomials, its accuracy is optimal in an $L^2$ sense \cite{Smith}. SGS are part of a broader class known as \emph{spectral methods} \cite{alexanderian2013:master,constantine_thesis:master, UQHandbook_IntroUQ, ghanem_and_spanos:master, le_maitre:master, GI_book}.


SGS are called \emph{intrusive} in the sense that a \emph{new} system associated with the projection is developed and implemented, where the new system requires constructing expectations or inner products not typically used in deterministic codes; therefore, existing codes cannot be used directly to recover the random modes \cite{Debusschere2017, GI_book,Smith}. In other words, intrusive methods for uncertainty quantification reformulate the original stochastic model equations to obtain a system of deterministic equations --- the trade-off being that the deterministic system involves more equations and variables than the original model. For applications with large legacy codes or `black-box' executable files, a non-intrusive approach (e.g., Monte Carlo, stochastic collocation/quadrature \cite{xiu_stochcollsurvey}, etc.) might be more appropriate \cite{constantine_thesis:master}. 

In the conventional approach for SGS, the functional basis elements are chosen to be polynomials that are orthogonal with respect to the joint density of the random input variables. This representation has its roots in the work of Wiener \cite{wiener:master}, who expressed a Gaussian process as a countably infinite series of Hermite polynomials. Ghanem and Spanos \cite{ghanem_and_spanos:master} truncated Wiener’s representation and used the resulting finite series as a key ingredient in a stochastic finite element method. SGS based on polynomial expansions are often referred to as \emph{polynomial chaos} approaches.

When compared to MC algorithms, initial spectral method applications to UQ problems showed \emph{orders-of-magnitude} reductions in the cost needed to estimate statistics with comparable accuracy \cite{xiu_karniadakis:master}; however, there are of course some exceptions, e.g., see \cite{exceptionsToPCefficiency1, exceptionsToPCefficiency2}. Several PCE extensions have been developed in the last years  \cite{Navarro}. that have shown to be effective in a large number of applications in different fields: fluid dynamics \cite{Feinberg,le_maitre:master, verif_valid_ref_2, GI_book}, air-water flows in soil \cite{airflowsoil}, aircraft design \cite{ZDR}, radiation therapy \cite{Perko_2016, proton_therapy_thesis}, etc. Further, spectral Galerkin methods have the advantage of returning the entire distribution of the output QoI, from which any descriptive statistics (mean, variance, etc.) can be computed directly; this is in direct contrast to sampling strategies, which only compute pre-specified descriptive statistics.


\subsection{Limitations of polynomial chaos}\label{section_pc_limits}

Classically, polynomial chaos methods have the following disadvantages. 

\subsubsection*{Distribution of stochastic input parameter}

In this paper, we assume that the stochastic random variable input $\xi$ is Gaussian. However, in a series of papers  \cite{gPC_tech_report, xiu_thesis:master, other_gPC_paper, xiu_karniadakis:master}, Xiu et al.~introduced spectral expansions for non-Gaussian $\xi$, where the corresponding basis polynomials are selected to be orthogonal with respect to the density of $\xi$. For example, Legendre polynomials are selected when $\xi \sim U(a,b)$, Laguerre polynomials when $\xi \sim \Gamma(a,b)$, etc. This spearheaded the use of the so-called \emph{generalized polynomial chaos expansion} (gPCE) in an explosion of research applications \cite{const_other_ref, gPCE_research_ex_1,GI_Roe}. 

With regard to the convergence of these generalized expansions, the issue was approached rather guardedly \cite{ernst:master}; one of the earlier papers \cite{other_gPC_paper} on the topic states, ``Convergence to second-order stochastic processes can be possibly obtained as a generalization of Cameron–Martin theorem.'' Only in 2010 did Ernst et al.~prove that an arbitrary random variable with finite variance can be expressed as a generalized polynomial chaos expansion \emph{only} if the underlying probability measure is uniquely determined by its moments \cite{ernst:master} --- a condition which fails in some simple cases, such as when $\xi$ has a lognormal distribution (see pages 106 - 107 of \cite{prob_book}). Restrictions like this preclude the use of a polynomial chaos expansion for general densities constructed using Bayesian techniques~\cite{Smith}.


\subsubsection{Poor convergence for non-smooth solutions}  
As noted by Xiu et al., the exponential convergence rate of the PCE degrades when mapping the random input vector to the response is not sufficiently smooth \cite{xiu_thesis:master, xiu_karniadakis:master} due to Gibbs phenomenon \cite{Gibbs_orig, Gibbs_anotherref}. Further, Gibbs oscillations may cause a spectral approximation to seemingly attain nonphysical values. In the worst case, the PCE may fail to capture essential features of the stochastic quantity of interest \cite{LEMAITRE,GI_book}. Consequently, the PDEs considered in this paper are restricted to those whose solutions are smooth within the domain of interest. 


\subsubsection{Assumed independence of stochastic parameters}


For systems that depend on multiple stochastic parameters, we can extend the univariate versions of the Weiner-Hermite PCE, the gPCE, and stochastic Galerkin schemes (SGS) in general.

 In the multivariate case, one of the \emph{essential} assumptions when applying SGS is that the stochastic model parameters are independent. This assumption allows the joint distribution of the random variables to decompose as a product of probability densities, enabling the easy construction of multivariate orthogonal polynomials via tensorizing the spaces of univariate orthogonal polynomials \cite{rahman:master, Smith, xiu_thesis:master}. These orthogonal basis functionals are consistent with the underlying measure \cite{ernst:master} and thus constitute a complete Hilbert basis \cite{rahman:master}; as clarified only recently \cite{ernst:master}, this completeness\footnote{The completeness of the Hilbert basis is equivalent to the underlying probability measure being uniquely determined by its moments \cite{ernst:master}, which is the condition discussed in Sect.~\ref{section_pc_limits}.} is vitally important for convergence of the truncated PCE to the correct limit.

\subsubsection{Curse of dimensionality}


As with several numerical techniques, polynomial chaos suffers from the \emph{curse of dimensionality}, which is to say that as the number of parameters increases, the work required to compute the approximation increases exponentially \cite{Debusschere2017, Feinberg, rahman:master, Smith, xiu_thesis:master}. Over the years, there has been a vast amount of literature\footnote{For a survey of this work, see Giebel~\cite{griebel_2006}.} on methods for alleviating this curse of dimensionality for certain classes of problems, several of which involve differential equations with stochastic inputs \cite{constantine_thesis:master}. Even so, PC methods are typically used to address low-dimensional problems --- that is, problems for which the number of parameters is not very large (say, less than ten) \cite{rahman:master}.

\subsection{Multivariate polynomial chaos}\label{multi_polychaos_intro}
 As described in Section \ref{section_pc_limits},  in the multivariate case, the PCE (along with the gPCE and SGS overall) is \emph{founded} on the assumption that its input random variables are independent \cite{constantine_thesis:master, Feinberg, Navarro, GI_book, Smith}. 
\subsubsection{Decorrelating transformations}\label{sec_decorr_trans}
 In reality, there may exist significant correlation or dependence among input variables.\footnote{For a few of countless examples of modeling PDEs with correlated stochastic Gaussian parameters, see (8.22) and (8.24) in \cite{Smith}, which examine the steady state heat model and an HIV model.} One approach is to apply a  decorrelating transformation \cite{Feinberg, Navarro, Smith} and work with the PCE in the resulting independent\footnote{As mentioned, we focus on Gaussian parameters and therefore may use the words \emph{uncorrelated} and \emph{independent} interchangeably.} parameters; specifically, some options involve linear decorrelating maps (e.g., Rosenblatt \cite{Rosenblatt}, Nataf \cite{Nataf}, etc.), the K-L expansion \cite{KL_ref_from_navarro}, or proper orthogonal decomposition \cite{orthog_decomp_ref}.


The challenge is that such transformations tend to introduce discontinuities or steep gradients \cite{Feinberg, GI_book, xiu_karniadakis:master}, which (as desired in Section \ref{section_pc_limits}) may cause a significant slowdown in the convergence speed of PC methods due to Gibbs phenomenon \cite{Gibbs_orig, Gibbs_anotherref, GI_Roe}. Further, for applications like general nonlinear conservation laws (e.g., fluid dynamics problems) \cite{GI_Roe}, diffusion modeling in multilayered materials \cite{Feinberg}, and in blood flow simulations at arterial junctions \cite{cardiovasc_ex}, the PDE solution may have discontinuities and/or steep gradients to \emph{begin with}  --- so these effects on convergence are worsened by decorrelating; the result is an expansion with a lower convergence rate as the polynomial degree increases \cite{const_other_ref, Feinberg, Gibbs_orig, Gibbs_anotherref, xiu_thesis:master,xiu_karniadakis:master}. Still, utilizing such transformations is an active area of research in the context of polynomial chaos \cite{Feinberg,Jakeman,DataDrivenUsesDecorrelateTransformation}. 



The work in this paper does not use transformation-based methods.

 \subsubsection{Transformation-free approaches}\label{trans_free_approaches}
There have been several transformation-free approaches to account for arbitrary but dependent Gaussian random variables. Using Gram–Schmidt orthogonalization \cite{Golub}, Navarro et al.~discussed construction of a PCE with multivariate orthogonal polynomials for correlated variables \cite{Navarro}. However, the existence of multivariate Hermite orthogonal polynomials, which can be used as a basis for dependent Gaussian measures, has not been recognized \cite{rahman:master}. Soize and Ghanem proposed orthogonal functional bases with regard to a general dependent probability measure of random input, but the bases elements are not necessarily \emph{polynomials} \cite{Soize_Ghanem}; as a result, an analytical treatment of PCE-based statistics in this case is highly non-trivial \cite{Smith}. Furthermore, both the works of Navarro and Soize/Ghanem do not address a \emph{fundamental} concern raised by Ernst et al.~about the completeness of their orthogonal basis functionals, which was shown to be a necessary condition for convergence \cite{ernst:master, rahman:master}. 

In 2017, Rahman constructed a PCE with \emph{weakly} orthogonal (defined in Eq.~\ref{weak_orthog}) basis functionals consistent with the Gaussian measure, which was shown to converge to the correct limit in mean-square, probability, and distribution  \cite{rahman:master}. We will refer to this series expansion as the \emph{correlated multivariate PCE} (or CPCE)\footnote{In \cite{rahman:master}, Rahman refers to the introduced countably infinite PC series in correlated Gaussians as the \emph{generalized PCE}. However, this language might cause confusion, given that the phrase \emph{generalized PCE} (gPCE) typically refers to the univariate PC expansion first introduced by Xiu and Karniadakis in 2002 for non-Gaussian random variables \cite{xiu_thesis:master, xiu_karniadakis:master}; see the discussion in Sect.~\ref{section_pc_limits}.} and use it exclusively throughout the rest of this document.

\subsection{Higher-order products}\label{higher_order_products}

In the Galerkin framework, one projects weighted residuals onto a finite-dimensional subspace spanned by basis functionals to provide the constraints required to solve for the deterministic coefficients \cite{Smith}. When utilizing the CPCE, the projection step introduces expectations of products of multivariate Hermite polynomials in the Gaussian parameters. One major consequence of weak orthogonality is that these \emph{$N$th products} of the form $\mathbb{E}_{\bxi}(H_{\balpha^{(1)}}(\bxi) \cdots H_{\balpha^{(N)}}(\bxi))$ no longer decompose into a multiplication of their one-dimensional counterparts. Often these expectation integrals over the $n$-dimensional parameter space are approximated numerically  --- and doing so can be highly nontrivial \cite{constantine_thesis:master,Feinberg,Smith}. Typically, these integrals are estimated with some combination of sampling methods, standard Gaussian quadrature/collocation, and/or sparse grid techniques (when possible) \cite{alexanderian2013:master, Gunzburger2017, rahman:master, intro_ref:master, xiu_thesis:master, xiu_stochcollsurvey} --- that is, with the statistical methods discussed in Section \ref{stat_methods}. In 2017, Rahman first provided a formula for the \emph{double products} ($N = 2$) or \emph{second-order moments}, $\mathbb{E}_{\bxi}(H_{\balpha}(\bxi) H_{\bbeta}(\bxi)),$ that appear during Galerkin projection \cite{rahman:master}. Prior to the work in this paper, in the presence of multiple correlated uncertainties, there was no closed-form expression for the $N$th products of the multivariate Hermite polynomials for any $N$ larger than two.

\subsection{Summary of contributions}
This paper presents technical contributions regarding multivariate Hermite polynomials (MHPs) whose inputs are correlated Gaussian random variables, culminating in a recursive formula for their $N$th order products.~Below, $\bm{\xi}~\sim~\mathcal{N}(\bm{0}, \Sigma)$ for known covariance matrix ($\Sigma$). All expected values whose inputs involve MHPs are understood to mean the expectation with respect to the $d\bxi$ measure.

In particular, the key results are as follows. 
\begin{itemize}
\item An MHP indexed by a sum (Lemma \ref{sum_of_indices}) can be decomposed as a sum of lower-order MHPs.
\item The higher-order derivative $\partial^{\A}(H_{\bbeta})$ of an MHP can be expressed in terms of double products and lower-order Hermite polynomials (Theorem \ref{deriv_formula}) 



\item There is a formula for the triple product (Theorem \ref{trip_prod}) in terms of double products. For  $\balpha, \bbeta, \bgamma \in \N_0^n$ such that $\ogamma = \text{max}(\oalpha, \obeta, \ogamma)$,
\begin{align*}
\langle H_{\balpha}, H_{\bbeta}, H_{\bgamma} \rangle =  \hspace{-10pt} \sum_{\substack{\A \thinspace \le \balpha \\ \B \le \bbeta \\ |\A| = |\B| = k}} \mbinom{\balpha}{\A} \mbinom{\bbeta}{\B} \langle H_{\A}, H_{\B} \rangle \langle H_{\bm{\alpha + \beta - A - B}}, H_{\bgamma} \rangle.
\end{align*}
for $k := \frac{\oalpha + |\bbeta| - |\bgamma|}{2}.$ Further, the triple product is shown to equal zero when certain conditions on the relative sizes and orders of the mult-indices are satisfied; namely, if 
\begin{enumerate}
\item[(a)] $\oalpha + \obeta < \ogamma$ or 
\item[(b)] $\oalpha + \obeta + \ogamma$ is odd,
\end{enumerate}
then the triple product $\langle H_{\balpha}, H_{\bbeta}, H_{\bgamma} \rangle$ = 0. 

 \item For one-dimensional Hermite polynomials and indicies $i,j,k \in \N_0$, the expression above reduces to the well-known triple product formula (Equation \ref{sect_trip_prod_n_1}).

\end{itemize}

The paper is organized as follows. Section \ref{defs_and_notation} presents relevant definitions and notational conventions used throughout the document. In section \ref{MHP_section}, we introduce correlated multivariate Hermite polynomials (MHPs), and in section \ref{sec_PC_corr_gauss}, these polynomials serve as the basis functionals for the so-called \emph{correlated polynomial chaos expansion} (CPCE). Section \ref{example_application} presents an example application of solving a stochastic logistic growth model with the CPCE. In this example, we observe how quickly the number of $N$th products coefficeints (that is, $N$th moments of multivariate Hermite polynomials) increases in the PDE system due to the curse of dimensionality; even when the stochastic parameter $\bm{\xi}$ has only $n = 2$ components and the truncated CPCE has only $M+1$ = 3 terms, there are over 100 potentially nonzero double and triple product coefficients to compute. Consequently, estimating each individual triple produc coefficient via (say) Monte Carlo sampling can quickly become cost prohibitive. Therefore, having an equation for triple product coefficients that a computer can evaluate directly \emph{substantially} reduces total computation time.

 \section{Definitions and Notation}\label{defs_and_notation}

\begin{definition}[Multi-index]\label{multi-index} An $n$-dimensional \textit{multi-index} $\balpha$ is an $n$-tuple \\ $(\alpha_1, \ldots, \alpha_n)~\in~\N_0^n$ of non-negative integers. For any $n \in \N$, a  \textit{multi-index $\balpha$ over $[n]$} is an $n$-tuple such that every $\alpha_i \in \N_0$ satisfies $\alpha_i \le n.$ Each $\alpha_i$ is referred to as the \textit{$i$th element of the multi-index~$\balpha$.}
\end{definition}

\noindent Unless otherwise specified, a multi-index $\balpha$ is assumed to have the notation given in Def.~\ref{multi-index}. For multi-indices $\balpha, \bbeta \in \N_0^n$, one defines \cite{rahman:master,slepian:master}
\begin{enumerate}
\item[1.] \emph{componentwise sum and difference}: $\balpha \pm \bbeta = (\alpha_1 \pm \beta_1, \ldots, \alpha_n \pm \beta_n)$, 
\item[2.] \emph{absolute value}: $|\balpha| := \|\balpha\|_1 = \alpha_1 + \cdots + \alpha_n$, 
which we call the \emph{order} of $\balpha$,
\item[3.] \emph{factorial}: $\balpha ! = \alpha_1! \cdots \alpha_n! = \prod_{i=1}^n \alpha_i !$, and the
\item[4.] \emph{partial derivative}: $\partial^{\balpha} = \partial_1^{\alpha_1} \cdots \partial_n^{\alpha_n}$.
\end{enumerate}
Moving forward, any $\balpha$ will denote a multi-index over $[n]$ of order $k \ge 1$ unless otherwise specified. 

The \emph{partial derivative operator} for $s(\balpha)$ is defined as
$$\partial^{s(\balpha)}:= \partial^{s(\balpha)_1} \cdots \partial^{s(\balpha)_k} = \partial_1^{\alpha_1} \cdots \partial_n^{\alpha_n}$$
so that $\partial^{s(\balpha)} = \partial^{\balpha}$ as expected. For multi-indices, Leibniz' formula \cite{leibniz-rule:master} for the generalized product rule is as follows:
\begin{equation}\label{leibniz-rule}
\partial^{\balpha}(fg) = \sum_{\A \le \balpha} \partial^{\A} (f) \partial^{\balpha - \A} (g).
\end{equation}

With this notation in place, we present the multi-dimensional analog of the Hermite polynomial.


\section{Correlated multivariate Hermite polynomials}\label{MHP_section}
Let $(\Omega, \mathcal{A}, \mathbb{P})$ be a complete probability space, where
\smallskip
 \begin{itemize}
\item $\Omega$ is a nonempty sample space,
\item $\mathcal{A}$ is a Borel $\sigma$-algebra on $\Omega$, and
\item $\mathbb{P}: \mathcal{A} \rightarrow [0,1]$ is a probability measure.
\end{itemize}
\medskip

Let  $\bm{\xi} \sim \mathcal{N}(\bm{0}, \Sigma)$ be an $n$-dimensional random variable with known symmetric positive definite covariance matrix $\Sigma = \mathbb{E}(\bm{\xi} \bm{\xi}^T).$  
That is, $\bm{\xi}: (\Omega, \mathcal{A}) \rightarrow (\R^n, \mathcal{B}_{\R^n})$ where $\mathcal{B}_{\R^n}$ is the Borel $\sigma$-algebra on $\R^n$. Then $\bm{\xi}$ has the joint probability density function \cite{Gauss_mult_idx_ref:master,rahman:master,xiu_thesis:master},
\begin{equation}\label{density}
\varphi: \R^n \rightarrow \R_{\ge 0} \qquad \varphi (\bm{x}; \Sigma) = \frac{1}{(2 \pi)^{n/2} |\det(\Sigma)|^{1/2} } \exp\left( - \frac{1}{2} \bm{x}^T \Sigma^{-1} \bm{x} \right).
\end{equation}

The following definition of the multivariate Hermite polynomial is a natural extension of its one-dimensional counterpart \cite{GI_book, Smith}. 

\begin{definition}[Multivariate Hermite polynomial]\label{mult_herm_def} Let $\bm{\xi} \sim \mathcal{N}(\bm{0}, \Sigma)$ such that $\Sigma \in \R^{n \times n}$ is symmetric positive definite (SPD) with joint density function given by Eq.~\ref{density}. Then for any multi-index $\balpha$ over $[n]$, the \emph{multivariate Hermite polynomial} $H_{\balpha}(\bm{\xi}; \Sigma)$ indexed by $\balpha$  is a polynomial in $\bm{\xi}$ of degree $|\balpha|$ defined as 
\[ H_{\balpha}(\bm{\xi}; \Sigma) = \begin{cases} \frac{(-1)^{|\alpha|}}{\varphi(\bm{\xi}; \Sigma)} \partial^{\balpha}(\varphi(\bm{\xi}; \Sigma)) &\text{ if } |\balpha| \ge 1 \\ 1 &\text{ if } |\balpha| = 0. \end{cases} \]
We will often abbreviate a multivariate Hermite polynomial as an MHP. 
\end{definition}
Note that each since each $H_{\balpha}$ is a polynomial of order~$\oalpha$,
\begin{align}\label{deriv_too_big_0}
\partial^{\bbeta} (H_{\balpha}) = 0 \qquad \text{whenever } \oalpha < \obeta 
\end{align}
for $\balpha, \bbeta \in \N_0^n.$ Equation (\ref{deriv_too_big_0}) will be useful in later sections.

Table \ref{MHPs} includes the first few multivariate Hermite polynomials for $\balpha \in \N_0^n$ with $n = 2$ and $n = 3$ in graded lexicographic order. Figure \ref{MHPsfig} depicts the first six MHPs graphically for a given $2 \times 2$ covariance matrix. 

{\begin{table}[H]
\renewcommand{\arraystretch}{1.1}
\begin{center}
\caption{Multivariate Hermite polynomials (MHPs) in $\bxi \sim \mathcal{N}(\bm{0}, \Sigma)$ indexed by $\balpha \in \N_0^n$ for $n = 2, 3,$ listed in graded lexicographic ordering.  \textbf{Note that for the SPD covariance $\Sigma$ matrix, $\Sigma_{ij}^{-1}$ denotes the $(i,j)$th entry of $\Sigma^{-1}$ and \emph{not} $\frac{1}{\Sigma_{ij}}.$} The symmetry of $\Sigma^{-1}$ is applied as well.}\label{MHPs}
\medskip
\begin{tabular}{| c|c|c|c | }
\hlineB{.5}
$\balpha \in \N_0^2$ & $H_{\balpha}(\bxi)$ & $\balpha \in \N_0^3$ & $H_{\balpha}(\bxi)$ \\
\hlineB{3}
$(0,0)$ & 1 & $(0,0,0)$ & 1 \\ \hline
$(1,0)$ & $\Sigma_{11}^{-1} \xi_1 + \Sigma_{12}^{-1} \xi_2 $ & $(1,0,0)$ &  $\Sigma_{11}^{-1} \xi_1 + \Sigma_{12}^{-1} \xi_2  + \Sigma_{13}^{-1} \xi_3$ \\ \hline
$(0,1)$ & $ \Sigma_{12}^{-1} \xi_1 + \Sigma_{22}^{-1} \xi_2$ & $(0,1,0)$ & $ \Sigma_{12}^{-1} \xi_1 + \Sigma_{22}^{-1} \xi_2 + \Sigma_{23}^{-1} \xi_3$ \\ \hline
 &  & $(0,0,1)$ & $\Sigma_{13}^{-1} \xi_1 + \Sigma_{23}^{-1} \xi_2 + \Sigma_{33}^{-1} \xi_3 $  \\ \hline
$(2,0)$ & $(\Sigma_{11}^{-1})^2 \xi_1^2  + 2 \Sigma_{11}^{-1} \Sigma_{12}^{-1} \xi_1 \xi_2 $  & $(2,0,0)$  & $(\Sigma_{11}^{-1})^2 \xi_1^2  + 2 \Sigma_{11}^{-1} \Sigma_{12}^{-1} \xi_1 \xi_2 $ \\ 
         & $+ (\Sigma_{12}^{-1})^2 \xi_2^2  - \Sigma_{11}^{-1}$ &  & $+ 2 \Sigma_{11}^{-1} \Sigma_{13}^{-1} \xi_1 \xi_3  + (\Sigma_{12}^{-1})^2 \xi_2^2 $ \\
         & & & $+ 2 \Sigma_{12}^{-1} \Sigma_{13}^{-1} \xi_2 \xi_3  + (\Sigma_{13}^{-1})^2 \xi_3^2 - \Sigma_{11}^{-1}$ \\ \hline
$(1,1)$ & $\Sigma_{11}^{-1} \Sigma_{12}^{-1} \xi_1^2 $  & $(1,1,0)$ & $\Sigma_{11}^{-1} \Sigma_{12}^{-1} \xi_1^2 +(\Sigma_{11}^{-1} \Sigma_{22}^{-1} + (\Sigma_{12}^{-1})^2) \xi_1 \xi_2$  \\
        & $+(\Sigma_{11}^{-1} \Sigma_{22}^{-1} + (\Sigma_{12}^{-1})^2) \xi_1 \xi_2$  & & $+(\Sigma_{11}^{-1} \Sigma_{23}^{-1} + \Sigma_{12}^{-1} \Sigma_{13}^{-1}) \xi_1 \xi_3  + \Sigma_{12}^{-1} \Sigma_{22}^{-1} \xi_2^2$ \\
        & $+ \Sigma_{12}^{-1} \Sigma_{22}^{-1} \xi_2^2 -\Sigma_{12}^{-1}$  & & $ (\Sigma_{12}^{-1} \Sigma_{23}^{-1} +  \Sigma_{13}^{-1}\Sigma_{22}^{-1})\xi_2 \xi_3 $ \\
        & & & $+ \Sigma_{13}^{-1} \Sigma_{23}^{-1} \xi_3^2 -\Sigma_{12}^{-1}$ \\ \hline
        & & $(1,0,1)$ & $\Sigma_{11}^{-1} \Sigma_{13}^{-1} \xi_1^2 + (\Sigma_{11}^{-1} \Sigma_{23}^{-1} + \Sigma_{12}^{-1} \Sigma_{13}^{-1}) \xi_1 \xi_2 $ \\
        & & & $+ (\Sigma_{11}^{-1} \Sigma_{33}^{-1} + (\Sigma_{13}^{-1})^2) \xi_1 \xi_3 + \Sigma_{12}^{-1} \Sigma_{23}^{-1} \xi_2^2 $ \\
        & & & $+ (\Sigma_{13}^{-1}\Sigma_{23}^{-1} + \Sigma_{12}^{-1} \Sigma_{33}^{-1}) \xi_2 \xi_3$ \\
        & & & $+ \Sigma_{13}^{-1} \Sigma_{33}^{-1} \xi_3^2 - \Sigma_{13}^{-1}$ \\  \hline
$(0,2)$ &$(\Sigma_{12}^{-1})^2 \xi_1^2 + 2 \Sigma_{12}^{-1} \Sigma_{22}^{-1} \xi_1 \xi_2$ & $(0,2,0)$ & $(\Sigma_{12}^{-1})^2 \xi_1^2 + 2 \Sigma_{12}^{-1} \Sigma_{22}^{-1} \xi_1 \xi_2$ \\
&          & & $+ 2 \Sigma_{12}^{-1} \Sigma_{23}^{-1} \xi_1 \xi_3 + (\Sigma_{22}^{-1})^2 \xi_2^2$  \\
& $+(\Sigma_{22}^{-1})^2 \xi_2^2 - \Sigma_{22}^{-1}$ &  & $+ 2\Sigma_{22}^{-1} \Sigma_{23}^{-1} \xi_2 \xi_3 + (\Sigma_{23}^{-1})^2 \xi_3^2 - \Sigma_{22}^{-1} $ \\  \hline
& & $(0,1,1)$ &$\Sigma_{12}^{-1} \Sigma_{13}^{-1} \xi_1^2 + (\Sigma_{12}^{-1} \Sigma_{23}^{-1} + \Sigma_{22}^{-1} \Sigma_{13}^{-1}) \xi_1 \xi_2$ \\
& &  & $+(\Sigma_{12}^{-1} \Sigma_{33}^{-1} + \Sigma_{23}^{-1} \Sigma_{13}^{-1}) \xi_1 \xi_3  + \Sigma_{22}^{-1} \Sigma_{23}^{-1} \xi_2^2$ \\
& & & $+ (\Sigma_{22}^{-1} \Sigma_{33}^{-1} + (\Sigma_{23}^{-1})^2) \xi_2 \xi_3$ \\
& & & $+ \Sigma_{23}^{-1} \Sigma_{33}^{-1} \xi_3^2 - \Sigma_{23}^{-1} $ \\  \hline
& &  $(0,0,2)$ & $(\Sigma_{13}^{-1})^2 \xi_1^2 + 2 \Sigma_{13}^{-1} \Sigma_{23}^{-1} \xi_1 \xi_2$ \\
& & & $+ 2 \Sigma_{13}^{-1} \Sigma_{33}^{-1} \xi_1 \xi_3 + (\Sigma_{23}^{-1})^2 \xi_2^2$ \\ 
& &  & $+ 2 \Sigma_{23}^{-1} \Sigma_{33}^{-1} \xi_2 \xi_3 + (\Sigma_{33}^{-1})^2 \xi_3^2 - \Sigma_{33}^{-1} $ \\  
        \hlineB{0.5}
\end{tabular}
\end{center}
\end{table}}

Since each component random variable $\xi_i \sim \mathcal{N}(0, \Sigma_{ii})$, $\mathbb{E}(\xi_i^\ell) = \delta_{\ell, 0}$. Hence, since each $H_{\balpha}(\bxi)$ is a polynomial of order $\oalpha$ in the $\xi_i$ variables, the first-order moments of the MHPs are
\begin{equation}\label{firstordermoments}
\mathbb{E}(H_{\balpha}(\bxi)) = \begin{cases} 1 &\text{ if } \balpha = (0,\ldots, 0) \\ 0 &\text{ else,} \end{cases} 
\end{equation} 
as confirmed in \cite{rahman:master}. 

For second-order moments, Dunkl and Xu \cite{dunkl_xu_2014:master} first showed that 
\begin{equation}\label{weak_orthog}
\langle H_{\balpha}, H_{\bbeta} \rangle = \mathbb{E} (H_{\bm{\alpha}}(\bm{\xi}) H_{\bm{\beta}}(\bm{\xi})) = \int H_{\balpha}(\bm{x}) H_{\bbeta}(\bm{x}) \varphi(\bm{x}) d \bm{x} = 0  \quad \text{ if } |\bm{\alpha}| \neq |\bm{\beta}|.
\end{equation}
Rahman \cite{rahman:master} coined the condition given by (\ref{weak_orthog}) as \emph{weak orthogonality} in the context of polynomial chaos. 

Weak orthogonality says that $H_{\balpha}$ is orthogonal to all MHPs of different degrees but may \emph{not} be orthogonal to other MHPs of the \emph{same} $\oalpha$ degree.\footnote{In fact, $H_{\balpha}$ is usually \emph{not} orthogonal to $H_{\bbeta}$ of the same degree with $\balpha \neq \bbeta$; for example, see \cite{mydoubprodpaper}.} For a given $n$ and $|\balpha| = M$, there are exactly $\mbinom{M + n -1}{n - 1}$ possible $\bbeta$ such that $|\balpha| = |\bbeta|$.\footnote{This follows from a standard \emph{stars and bars} counting argument.}

\begin{figure}[H]
\centerline{\includegraphics[scale = .7]{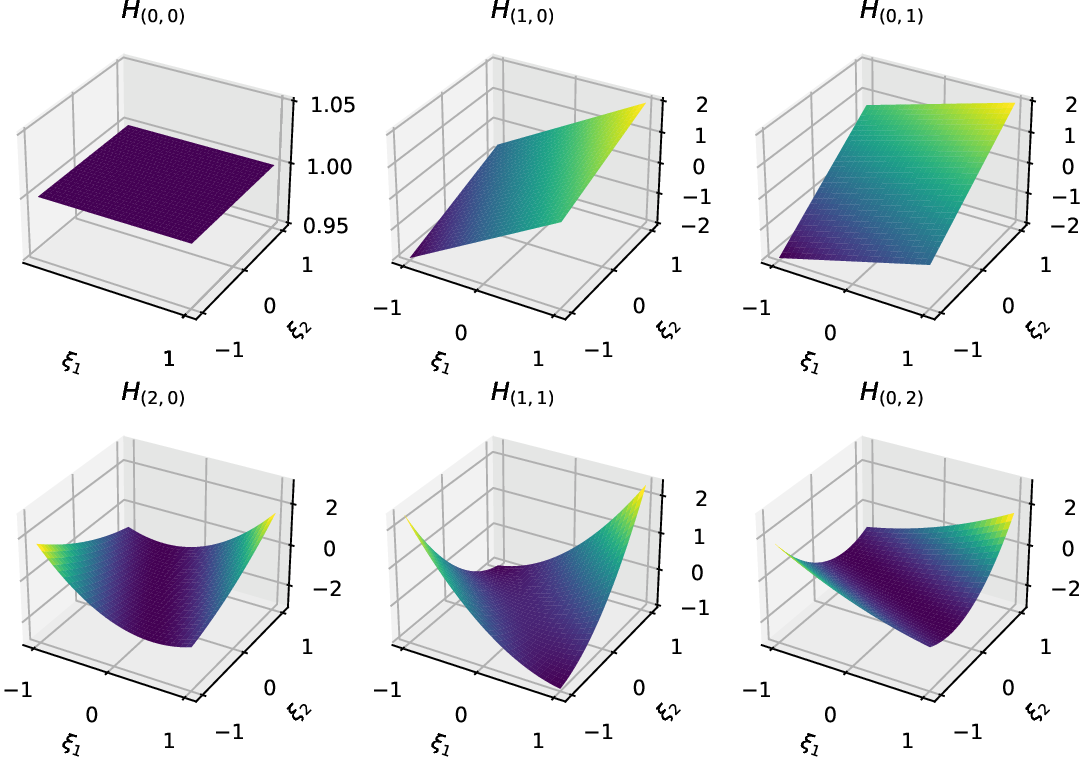}}
\caption{Multivariate Hermite polynomials $H_{\balpha}(\xi_1, \xi_2)$ for $|\balpha| \le 2$, where $\text{Var}(\xi_1) = \text{Var}(\xi_2) = 1$ and $\text{Cov}(\xi_1, \xi_2) = -1/2.$ }\label{MHPsfig}
\end{figure}
When the components $\xi_i$ of $\bm{\xi}$ lack correlation --- which is to say that the covariance matrix $\Sigma$ is diagonal --- we can observe from Table \ref{MHPs} that 
\begin{align}
H_{(1,1)}(\xi_1, \xi_2) = \Sigma_{11}^{-1} \Sigma_{22}^{-1} \xi_1 \xi_2 = (\Sigma_{11}^{-1} \xi_1) (\Sigma_{22}^{-1}\xi_2) = H_{1}(\xi_1) H_{1}(\xi_2) 
\end{align} 
when $\Sigma = \text{diag}(\Sigma_{11}, \Sigma_{22})$. In fact, when $\Sigma$ is diagonal, it is well-established \cite{Feinberg, GI_book, salazar,Smith, xiu_thesis:master} that we can decompose any $H_{\balpha}$ as the product of univariate Hermite polynomials:
\begin{align}\label{1Dprod}
H_{\balpha}(\bxi) = \prod_{i=1}^n H_{\alpha_i}(\xi_i) \quad \text{for } \bxi \sim \mathcal{N}(\bm{0}, \text{diag}(\Sigma_{11}, \ldots, \Sigma_{nn})). 
\end{align} 
Equation \ref{1Dprod} is a direct result of the PDF $\varphi$ of $\bm{\xi}$ decomposing as the product of marginal densities $\varphi_i$ for each $\xi_i$ random variable when $\bm{\xi}$ has uncorrelated components. This is what is meant by constructing multivariate orthogonal polynomials via tensoriziation of univariate orthogonal polynomials. Further, the $\{H_{\balpha}\}$ are \emph{orthogonal} in this case (not just weakly orthogonal), because
\begin{align*}
\langle H_{\balpha}, H_{\bbeta} \rangle &= \int H_{\balpha}(\bm{x}) H_{\bbeta}(\bm{x}) \varphi(\bm{x}) d \bm{x} = \int \prod_{i=1}^n H_{\alpha_i}(x_i) H_{\beta_i}(x_i) \varphi_i(x_i) d x_i \\
&= \prod_{i=1}^n \int H_{\alpha_i}(x_i) H_{\beta_i}(x_i) \varphi(x_i) dx_i = \prod_{i=1}^n \langle H_{\alpha_i}, H_{\beta_i} \rangle = \balpha! \thinspace \delta_{\balpha, \bbeta}
\end{align*}
where we applied Fubini's theorem \cite{Amir:master,Baby_Rudin} and the orthogonality of the one-dimensional $H_{\alpha_i}$ polynomials. When $\bm{\xi} \sim \mathcal{N}(\bm{0}, I)$, we can also observe that
\begin{align*}
H_{(0,0)} = 1, \qquad H_{(1,0)}(\xi_1, \xi_2) = \xi_1, \qquad H_{(2,0)}(\xi_1, \xi_2) = \xi_1^2 - 1, \qquad \ldots
\end{align*} 
matching the classic one-dimensional polynomials.

\section{PC Expansion with Correlated Gaussians}\label{sec_PC_corr_gauss}
The MHPs serve as the basis polynomials in the multivariate polynomial chaos expansion with correlated Gaussians established by Rahman \cite{rahman:master}. In particular, we consider the following spectral series. 

\begin{definition}[Correlated multivariate polynomial chaos expansion]\label{MPC} Let $\bxi \sim \mathcal{N}(\bm{0}, \Sigma)$ and $y(\bm{\xi})$ be a real-valued output random variable defined on the same complete probability space $(\Omega, \mathcal{A}, \mathbb{P})$. If $y(\bxi) \in L^2(\Omega, \mathcal{A}, \mathbb{P})$ (that is, if $y(\bxi)$ is \emph{square-integrable}), we can express $y(\bm{\xi})$ via the \emph{correlated multivariate polynomial chaos expansion} as
\begin{equation*}
\bm{y}(\bm{\xi}) = \sum_{\balpha \in \N_0^n} y_{\balpha} H_{\balpha}(\bxi).
\end{equation*}
\end{definition}
Definition \ref{MPC} \emph{generalized} the previous notion of a multivariate PC expansion by introducing basis functionals $H_{\balpha}(\bxi)$ whose formulae incorporate the non-diagonal entries of the covariance matrix of $\bxi$.\footnote{Consequently, Rahman refers to Def.~\ref{MPC} in his original paper as the \emph{generalized PCE} \cite{rahman:master}. We instead use the name \emph{correlated PC Expansion}, or \emph{CPCE}, to avoid confusion with the \emph{generalized PCE} coined by Xiu and Karniadakis \cite{xiu_thesis:master,xiu_karniadakis:master}, which is a series that still requires independent random inputs but accounts for non-Gaussian variables.} Going forward, we will sometimes refer to the series in Definition \ref{MPC} as the \emph{correlated multivariate PCE}, the \emph{correlated PCE}, or the \emph{CPCE} for short. When referring to the multivariate PCE in \emph{independent} Gaussian parameters, we will use the phrase \emph{classical PCE}. 

When truncating to some $M$ value, we can define

\begin{equation}\label{trunc_CPCE}
y^{(M)}(\bxi) =  \sum_{\substack{\balpha \in \N_0^n \\ |\balpha| \le M}} y_{\balpha}^{(M)} H_{\balpha} (\bm{\xi}),
\end{equation}
which is a projection of $y$ onto the space $P_{M}(\bxi)$ of all polynomials  of degree up to and including $M$ in the $n$ random variables $\xi_1, \ldots, \xi_n$\cite{Smith}. 

Then $y^{(M)}(\bxi)$ converges to the true $y(\bxi)$ in mean-square, probability, and distribution~\cite{rahman:master}. The random nodes $y_{\balpha}^{(M)}$ have the superscript $(M)$ to indicate that they solve the deterministic $M$ system and thus depend on $M$ (though converge to the true $y_{\balpha}$ in Def.~\ref{MPC} as $M \rightarrow \infty$). 

The number of terms in the truncated CPCE (\ref{trunc_CPCE}) is the same as that of the classical PCE for uncorrelated Gaussians --- the only difference is that the basis functionals $H_{\balpha}(\bxi)$ now have additional (potentially nonzero) terms whose coefficients are non-diagonal entries of $\Sigma.$ Namely, the total number of terms $n_M$ in (\ref{trunc_CPCE}) is given by
\begin{align}\label{n_M}
n_M:=  \big| \{ \balpha \in \N_0^n \mid 0 \le \oalpha \le M \} \big| = \frac{(M+n)!}{M!n!} = \mbinom{M + n}{n},
\end{align}
as has been noted extensively in the literature \cite{GI_book,salazar,Smith,xiu_thesis:master,xiu_karniadakis:master}. Consequently, we can observe that correlated polynomial chaos suffers from the same \emph{curse of dimensionality} as its uncorrelated counterpart, which is to say that the number of expansion terms grows rapidly when the dimensionality increases \cite{constantine_thesis:master,Smith,xiu_thesis:master}; this is discussion in Section \ref{section_pc_limits}. 

As with the one-dimensional Wiener-Hermite PCE, the CPCE can be used to intrusively solve stochastic ordinary differential equations without sampling. Consider the solution $u$ to a differential equation that incorporates the stochastic quantity $\bxi$, so 

\begin{align}\label{du_dt_high_dim}
 \mathcal{F}(u, \bm{d}; \bxi) &=  0, \qquad \bm{d} \in \mathcal{D} \subset \R^p, \ \bxi \sim \mathcal{N}(\bm{0}, \Sigma).
 \end{align}
In (\ref{du_dt_high_dim}), $\mathcal{F}$ is a general differential operator, often a mix of linear and nonlinear terms. The $\mathcal{D}$ is a nonempty, compact domain for $p$ deterministic equation parameters. 

If $u(\bm{d}; \bxi) \in L^2(\Omega, \mathcal{A}, \mathbb{P})$ for every $\bm{d} \in \mathcal{D}$, we can write
\begin{equation}\label{high_dim_PCE}
u(\bm{d}; \bm{\xi}) = \sum_{\balpha \in \N_0^n} \thinspace u_{\balpha}(\bm{d}) H_{\balpha}(\bm{\xi})
\end{equation}
where the $u_{\balpha}:\mathcal{D} \rightarrow \R$ are \emph{deterministic} coefficient functions. The truncated CPCE for the stochastic process $\{u(\bm{d};\bxi)\}_{\bm{d} \in \mathcal{D}}$ is defined as

\begin{equation}\label{high_dim_uM}
 u^{(M)}(\bm{d};\bxi) := \sum_{\substack{\balpha \in \N_0^n \\ |\balpha| \le M}} u_{\balpha}(\bm{d})^{(M)} H_{\balpha} (\bm{\xi}).
 \end{equation}
Observe that
\begin{align*}
\mathbb{E}(u(\bm{d};\bm{\xi}) ) &= \sum_{\bm{\alpha} \in \N_0^n}  u_{\bm{\alpha}}(\bm{d}) \mathbb{E}(H_{\bm{\alpha}} (\bm{\xi})) = u_{\bm{0}}(\bm{d}) 
\end{align*}
and
\begin{align*}
\text{Var}(u(\bm{d}; \bm{\xi}) ) &= \mathbb{E}\left( \left( \sum_{\bm{\alpha} \in \N_0^n}  u_{\bm{\alpha}}(\bm{d}) H_{\bm{\alpha}} (\bm{\xi})\right)^2\right) - u_{\bm{0}}(\bm{d})^2 
\\
&= \sum_{\substack{\bm{\alpha} \in \N_0^n \\ 1 \le \oalpha}} \sum_{\substack{\bbeta \in \N_0^n \\ \obeta = \oalpha}} u_{\balpha}(\bm{d}) u_{\bbeta}(\bm{d}) \langle H_{\balpha}, H_{\bbeta} \rangle. 
\end{align*}
Further,

\begin{align*}
\mathbb{E}(u^{(M)}(\bm{d}; \bm{\xi})) &=  u_{\bm{0}}^{(M)}(\bm{d}) \\
\text{Var}(u^{(M)}(\bm{d};\bm{\xi}) )  &=  \sum_{\substack{\bm{\alpha} \in \N_0^n \\ 1 \le \oalpha \le M}} \sum_{\substack{\bbeta \in \N_0^n \\ \obeta = \oalpha}} u_{\balpha}^{(M)}(\bm{d}) u_{\bbeta}^{(M)}(\bm{d}) \langle H_{\balpha}, H_{\bbeta} \rangle.
\end{align*}
The means of the $u^{(M)}$ and true stochastic process $u$ differ by $u_0 - u_{0}^{(M)} \rightarrow 0$. In addition, $\text{Var}(u^{(M)}(\bm{d}; \bm{\xi}) )$ and $\text{Var}(u(\bm{d}; \bm{\xi}))$ differ by $\displaystyle \sum_{\substack{\bm{\alpha} \in \N_0^n \\  \oalpha > M}} \sum_{\substack{\bbeta \in \N_0^n \\ \obeta = \oalpha}}  u_{\balpha}^{(M)}(\bm{d}) u_{\bbeta}^{(M)}(\bm{d}) \langle H_{\balpha}, H_{\bbeta} \rangle,$ which tends to 0 as the coefficient functions decay $u_{\balpha}, u_{\bbeta}$ decay via Theorem 14 in \cite{rahman:master}. 

\section{Application: viscous Burgers' equation}\label{example_application}

In this section, we provide an example of using the correlated PCE to intrusively solve stochastic PDEs. We will observe how moments of the multivariate Hermite polynomials appear in the resulting deterministic systems of equations for the random modes $u_{\balpha}$.~As in the one-dimensional case, the polynomial chaos procedure for transforming equations of the form (\ref{du_dt_high_dim}) is to substitute $u^{(M)}$ for $u$, multiply through by an arbitrary $H_{\bbeta}$, and integrate with respect to the $d \bm{\xi}$ measure, repeating for each $H_{\bbeta}$; this is effectively projecting onto the polynomial basis.

Suppose Eq.~\ref{du_dt_high_dim} represents a variant of viscous Burgers' equation,
 \begin{equation}\label{highdim_burgers}
 \frac{\partial u}{\partial t} + u \thinspace [\bm{a}^T  \nabla_{\bm x} u] = \nu \thinspace \nabla_{\bm{x}}^2 u,  \qquad \bm{a} = (1,1)^T, \qquad \nu > 0 \text{ fixed},
 \end{equation}
with a stochastic initial condition
\begin{equation}
u(\bm{x}, 0; \bm{\xi}) = \xi_1 \sin(x_1) + \xi_2 \sin(x_2).
\end{equation}
Substituting the truncated correlated PCE into Eq.~\ref{highdim_burgers}, multiplying through by some $H_{\bm{k}}(\bm{\xi}),$ and integrating with respect to $d\bm{\xi}$ yields
\begin{align}\label{M_system}
 &\hphantom{=} \sum_{\substack{\bm{i} \in \N_0^n \\ |\bm{i}| \le M}} \partial_t u_{\bm{i}}^{(M)}  \langle H_{\bm{i}}, H_{\bm{k}} \rangle +  \sum_{\substack{\bm{i} \in \N_0^n \\ |\bm{i}| \le M}}  \sum_{\substack{\bm{j} \in \N_0^n \\ |\bm{j}| \le M}} u_{\bm{i}}^{(M)} (\bm{a}^T \nabla_{\bm{x}} u_{\bm{j}}^{(M)}) \langle H_{\bm{i}},  H_{\bm{j}}, H_{\bm{k}} \rangle \\
 &=  \nu  \sum_{\substack{\bm{i} \in \N_0^n \\ |\bm{i}| \le M}} \nabla_{\bm{x}}^2 u_{\bm{i}}^{(M)} \langle H_{\bm{i}}, H_{\bm{k}} \rangle
\end{align}
for \emph{every} $\bm{k} \in \N_0^n$ with $0 \le |\bm{k}| \le M$. By weak orthogonality (\ref{weak_orthog}), we can eliminate terms scaled by $\langle H_{\bm{i}}, H_{\bm{k}}\rangle$ whose multi-indices $\bm{i}, \bm{k}$ have different magnitudes. So, the $M$ system for multivariate viscous Burgers' is
 \begin{align*}
\hspace{-20pt} \sum_{\substack{\bm{i} \in \N_0^n \\ |\bm{i}| = |\bm{k}|}} \left[ \partial_t u_{\bm{i}}^{(M)} - \nu \nabla_{\bm{x}}^2 u_{\bm{i}}^{(M)} \ \right] \langle H_{\bm{i}}, H_{\bm{k}} \rangle +  \sum_{\substack{\bm{i} \in \N_0^n \\ |\bm{i}| \le M}}  \sum_{\substack{\bm{j} \in \N_0^n \\ |\bm{j}| \le M}} u_{\bm{i}}^{(M)} (\bm{a}^T \nabla_{\bm{x}} u_{\bm{j}}^{(M)}) \langle H_{\bm{i}},  H_{\bm{j}}, H_{\bm{k}} \rangle= 0.
\end{align*}
By an identical process (Galerkin projection), the initial condition also satisfies the system
\begin{align*}
\sum_{|\bm{i}| \le M} u_{\bm{i}}^{(M)}(\bm{x}, 0) \langle H_{\bm{i}}, H_{\bm{k}} \rangle &= \sin(x_1) \int \xi_1 H_{\bm{k}}(\bxi) \thinspace d\bxi + \sin(x_2) \int \xi_2 H_{\bm{k}}(\bxi) \thinspace d\bxi \\
\end{align*}
for each $0 \le |\bm{k}| \le M$. Via a change of polynomial basis,
\begin{align*}
\Sigma^{-1} \begin{pmatrix} \xi_1 \\ \xi_2 \end{pmatrix} = \begin{pmatrix} H_{(1,0)}(\bxi) \\ H_{(0,1)}(\bxi) \end{pmatrix} \Longrightarrow \begin{pmatrix} \xi_1 \\ \xi_2 \end{pmatrix} = \Sigma \begin{pmatrix} H_{(1,0)}(\bxi) \\ H_{(0,1)} (\bxi) \end{pmatrix}. 
\end{align*}
So
\begin{align*}
\langle \xi_1, H_{\bm{k}}(\bxi) \rangle &= \Sigma_{11} \langle H_{(1,0)}(\bxi), H_{\bm{k}} \rangle + \Sigma_{12} \langle H_{(0,1)}(\bxi), H_{\bm{k}} \rangle \\
&= \begin{cases} \Sigma_{11} (\Sigma^{-1})_{11} + \Sigma_{12} (\Sigma^{-1})_{12} &\text{ if } \bm{k} = (1,0) \\  \Sigma_{11} (\Sigma^{-1})_{12} + \Sigma_{12} (\Sigma^{-1})_{22} &\text{ if } \bm{k} = (0,1) \\ 0 &\text{ else.} \end{cases} \\
&= \begin{cases} 1 &\text{ if } \bm{k} = (1,0) \\ 0  &\text{ else}\end{cases}  \\
&= \delta_{(1,0), \bm{k}}.
\end{align*}
Similarly, $\langle \xi_2, H_{\bm{k}}(\bxi) \rangle = \delta_{(0,1), \bm{k}}$. Hence, we have the following system of initial conditions for the random modes $u_{\bm{k}}$,
\begin{align}\label{burgers_init_mult}
\hspace{-25pt} \sum_{|\bm{i}| \le M} u_{\bm{i}}^{(M)}(\bm{x}, 0) \langle H_{\bm{i}}, H_{\bm{k}} \rangle &=  \sin(x_1) \delta_{(1,0), \bm{k}} + \sin(x_2) \delta_{(0,1), \bm{k}} \qquad \text{for all }  |\bm{k}| \le M.
\end{align}
Let $X^{(M)}$ denote the $n_M \times n_M$ \emph{double product matrix} in graded lexicographic order; that is, $X_{\bm{i}, \bm{j}} ^{(M)} = \langle H_{\bm{i}}, H_{\bm{j}} \rangle$ for all $ 0 \le |\bm{i}|, |\bm{j}| \le M$, where we recall $n_M$ from Eq.~\ref{n_M}. For example, when $M$ = 1, the double product matrix is
\begin{align*}
X^{(1)} &=  \begin{pmatrix} 1 & 0 & 0  \\ 0 & \Sigma_{11}^{-1} & \Sigma_{12}^{-1} \\ 0 & \Sigma_{12}^{-1} & \Sigma_{22}^{-1} \end{pmatrix},
\end{align*}
When $M = 2,$ $X^{(M)}$ is the 6 x 6 matrix
\begin{align*}
X^{(2)} := \begin{pmatrix} 1 & \rvline & 0 & 0 & \rvline & 0 & 0 & 0 \\ \hline
0 & \rvline & \Sigma_{11}^{-1} & \Sigma_{12}^{-1}            & \rvline & 0 & 0 & 0 \\ 
0 & \rvline & \Sigma_{12}^{-1} & \Sigma_{22}^{-1}            & \rvline & 0 & 0 & 0 \\ \hline
0 & \rvline & 0 & 0                                               & \rvline & 2 (\Sigma_{11}^{-1})^2 & 2 \Sigma_{11}^{-1} \Sigma_{12}^{-1} & 2 (\Sigma_{12}^{-1})^2 \\ 
0 & \rvline & 0 & 0                                               & \rvline & 2 \Sigma_{11}^{-1} \Sigma_{12}^{-1} & \Sigma_{11}^{-1} \Sigma_{22}^{-1} + (\Sigma_{12}^{-1})^2 & 2 \Sigma_{12}^{-1} \Sigma_{22}^{-1} \\ 
0 & \rvline & 0 & 0                                               & \rvline & 2 (\Sigma_{12}^{-1})^2 & 2 \Sigma_{12}^{-1} \Sigma_{22}^{-1} & 2 (\Sigma_{22}^{-1})^2 \end{pmatrix}.
\end{align*}
The double product matrix $X^{(M)}$ will always be block diagonal, since the ordering groups indices of the same order together, and $\langle H_{\bm{i}}, H_{\bm{j}} \rangle = 0$ when $|\bm{i}| \neq |\bm{j}|.$ It was proved that $X^{(M)}$ is symmetric positive definite as well \cite{rahman:master}.

Next, let $\bm{u}^{(M)}(\bm{x}, 0)$ be the $n_M \times 1$ vector of $\left(u_{\bm{i}} (\bm{x}, 0)\right)_{|\bm{i}| \le M}$ in graded lexicographic order. Then we can rewrite the initial condition (\ref{burgers_init_mult}) as
\begin{align}
X^{(M)} [u_{\bm{i}}^{(M)}(\bm{x}, 0)] = (0, \sin(x_1), \sin(x_2), 0, \ldots, 0)^T.
\end{align}
Define
\begin{align}
\bm{H}_{\bm{k}}^{(M)} \in \R^{n_M \times n_M} \qquad (\bm{H}_{\bm{k}}^{(M)})_{\bm{i}, \bm{j}} = \langle H_{\bm{i}}, H_{\bm{j}}, H_{\bm{k}} \rangle
\end{align}
and the $(n_M)^2 \times n_M$ matrix
\[ \mathbf{H}^{(M)} := \left[ \setlength{\extrarowheight}{3pt}  \begin{array}{c} \mathbf{H}_{\bm{0}}^{(M)} \\ \hline \vdots \\ \hline \mathbf{H}_{M \bm{e_n}}^{(M)} \end{array} \right]\]
where $\bm{e_n}$ is the $n$ standard basis vector (recalling that $\bm{\xi}$ is $n \times 1$).  That is, $\mathbf{H}^{(M)}$ is formed by stacking the $\bm{H}_{\bm{k}}^{(M)}$ on top of one another in order.
Note that $\bm{H}_{\bm{0}}^{(M)} = X^{(M)}$, since $H_{\bm{0}}(\bm{\xi}) = 1$, so the triple products (third order moments) are equivalent to double products (second order moments) in this case. 

Then $M$ system for viscous Burgers' equation, together with its initial condition (\ref{burgers_init_mult}), can then be re-expressed with matrix notation and the Kronecker product as
\begin{align*}
X^{(M)} \left( \partial_t \bm{u}^{(M)} - \nabla_{\bm{x}}^2 \bm{u}^{(M)} \right) + \left((\bm{u}^{(M)})^T \otimes I_{n_M} \right) \bm{H}^{(M)} \partial_{\bm{x}} \bm{u}^{(M)} = \bm{0}_{n_M} \\
u_{\bm{i}}^{(M)}(\bm{x}, 0) = [X^{(M)}]^{-1} \thinspace (0, \sin(x_1), \sin(x_2), 0, \ldots, 0)^T,
\end{align*}
where $\bm{u}^{(M)}$ is the $n_M \times 1$ vector of random modes.

This gives a system of \emph{deterministic} PDEs to solve, where the number of PDEs is equal to the number $n_M$ of $\bm{k} \in \N_0^n$ such that $|\bm{k}| \le M$ for the selected $M$ bound. In particular, a computer can solve such a system via standard numeric techniques if the coefficients $\langle H_{\bm{i}}, H_{\bm{k}} \rangle$ and $\langle H_{\bm{i}}, H_{\bm{j}}, H_{\bm{k}} \rangle$ are known. In fact, after substituting the correlated PCE into the original PDE, \emph{any} term that is linear in $\bm{\xi}$ will generate double product coefficients in the projection process; this is not specific to the temporal derivative $\partial_t \bm{u}$ and spatial Laplacian $\nabla_{\bm{x}}^2$ terms in viscous Burgers' equation. In general, terms that are polynomial of order $N$ in $\bm{\xi}$, i.e., $O(\bm{\xi}^{N}),$ yield $N$th order products post projection.

\section{Formulae for Multivariate Hermite Polynomials}\label{multivar_formulae}
In \cite{mydoubprodpaper}, we established the following result.

\begin{theorem}\label{old_result}
Let $\balpha, \bbeta \in \N_0^n$ such that $\oalpha = \obeta$. Then 
\[ \mathbb{E}(\partial^{\bbeta} H_{\balpha}(\bxi)) =  \partial^{\bbeta} H_{\balpha}(\bxi) = \langle H_{\balpha}, H_{\bbeta} \rangle. \] 
\end{theorem}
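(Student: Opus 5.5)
Since $H_{\balpha}$ is a polynomial of degree exactly $\oalpha$ in $\bxi$ and $\obeta = \oalpha$, the derivative $\partial^{\bbeta} H_{\balpha}$ kills every monomial except the top-degree ones, so it is a constant. That gives the first equality immediately: the expectation of a constant is itself. It remains to show that this constant equals $\langle H_{\balpha}, H_{\bbeta} \rangle$.

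For that I would use Gaussian integration by parts. By Definition \ref{mult_herm_def},
\[
\langle H_{\balpha}, H_{\bbeta}\rangle = \int H_{\balpha}(\x) H_{\bbeta}(\x) \varphi(\x)\, d\x = (-1)^{\obeta}\int H_{\balpha}(\x)\, \partial^{\bbeta}\varphi(\x)\, d\x .
\]
Integrating by parts $\obeta$ times, once for each elementary derivative $\partial_i$ in $\partial^{\bbeta}$, moves every derivative onto $H_{\balpha}$. Each step produces a factor $-1$, and together these cancel the $(-1)^{\obeta}$, leaving
\[
\langle H_{\balpha}, H_{\bbeta}\rangle = \int \partial^{\bbeta} H_{\balpha}(\x)\, \varphi(\x)\, d\x = \mathbb{E}\big(\partial^{\bbeta} H_{\balpha}(\bxi)\big).
\]
This argument does not need $\oalpha=\obeta$. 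When $\oalpha<\obeta$, Equation (\ref{deriv_too_big_0}) gives zero, which recovers weak orthogonality (\ref{weak_orthog}) as a consistency check.

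The only technical step is to justify that the boundary terms vanish at each integration by parts. At every stage the boundary integrand is a polynomial times a derivative of $\varphi$. Any partial derivative of $\varphi$ equals a polynomial times $\varphi$, namely $(-1)^{|\cdot|}H_{\cdot}\varphi$, so each boundary integrand is a polynomial times $\exp(-\tfrac12 \x^T\Sigma^{-1}\x)$. Because $\Sigma^{-1}$ is SPD, $\x^T\Sigma^{-1}\x \ge \lambda_{\min}(\Sigma^{-1})\|\x\|^2$, so these terms decay faster than any polynomial grows. All integrals are therefore absolutely convergent, Fubini allows the variables to be integrated one at a time, and the boundary contributions are zero.

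I would organize the argument as an induction on $\obeta$. The inductive step peels one $\partial_i$ off $\partial^{\bbeta}$ and applies one-dimensional integration by parts in $x_i$, using the decay bound above. This decay bound is the main point needing care; the rest is bookkeeping of signs.
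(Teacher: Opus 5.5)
The paper does not prove this statement; it imports it from the author's earlier work \cite{mydoubprodpaper}, so there is no in-paper argument to compare yours against step by step. Your proof is correct and self-contained:
\begin{itemize}
\item constancy of $\partial^{\bbeta}H_{\balpha}$ follows from degree counting;
\item the $\obeta$ integrations by parts produce signs that cancel the $(-1)^{\obeta}$ from Definition \ref{mult_herm_def};
\item your boundary-term justification is exactly what the one-variable integrations by parts and the appeal to Fubini require. Every intermediate integrand is $\partial^{\bm{q}}H_{\balpha}\cdot\partial^{\bbeta-\bm{q}}\varphi$, a polynomial times $\varphi$, and $\x^T\Sigma^{-1}\x \ge \lambda_{\min}(\Sigma^{-1})\|\x\|^2$.
\end{itemize}

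What your route buys is generality. Nothing in it uses that the first factor is $H_{\balpha}$, so it proves $\mathbb{E}\bigl(f(\bxi)H_{\bbeta}(\bxi)\bigr)=\mathbb{E}\bigl(\partial^{\bbeta}f(\bxi)\bigr)$ for every polynomial $f$, with no condition relating degrees. The paper uses this stronger form without proof in several places where Theorem \ref{old_result} as stated does not apply:
\begin{itemize}
\item the base case of Theorem \ref{IBP_trip_prod} uses $\langle H_{\balpha},H_{\bgamma}\rangle=\mathbb{E}(\partial^{\balpha}H_{\bgamma})$ for arbitrary orders;
\item the first line of the proof of Theorem \ref{trip_prod} is your identity with $f=H_{\bbeta}H_{\bgamma}$;
\item the last step of that proof cites Theorem \ref{old_result} for terms with $|\balpha+\bbeta-\A-\B|\neq\ogamma$.
\end{itemize}

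Taking $f=\partial^{\bbeta}H_{\bgamma}$ also gives Theorem \ref{IBP_trip_prod} in one line. As you note, weak orthogonality (\ref{weak_orthog}) follows from (\ref{deriv_too_big_0}) together with symmetry of the inner product, rather than having to be cited.
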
 

Note in Theorem \ref{old_result} that $\partial^{\bbeta} H_{\balpha}(\bxi)$ is constant (i.e., \emph{non-stochastic}), since the order of the polynomial $\oalpha$ equals the order of the derivative $\obeta$. The double product $\langle H_{\balpha}, H_{\bbeta} \rangle$ has a formula given in  \cite{mydoubprodpaper}. We will use Theorem \ref{old_result} in the proof of Lemma \ref{sum_corollary}, which in turn will be used to establish forthcoming results. 

We begin by rewriting a MHP indexed by a sum of indices as a finite sum of lower-order Hermite polynomials multiplied by certain partial derivatives. 
\begin{theorem}[Sum of indices formula]\label{sum_of_indices}
Let $\balpha, \bbeta \in \N_0^n.$ Then
\[ H_{\balpha + \bbeta} = \sum_{\bm{q} \le \balpha} (-1)^{|\bm{q}|} \mbinom{\balpha}{\bm{q}} H_{\balpha - \bm{q}} \thinspace \partial^{\bm{q}} (H_{\bbeta}). \] 
\end{theorem}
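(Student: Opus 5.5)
Directly from Definition \ref{mult_herm_def} together with the generalized Leibniz rule (\ref{leibniz-rule}). Write $\partial^{\balpha+\bbeta}\varphi = \partial^{\balpha}\big(\partial^{\bbeta}\varphi\big)$ and substitute $\partial^{\bbeta}\varphi = (-1)^{\obeta} H_{\bbeta}\,\varphi$, which is just the definition rearranged. This holds for $\bbeta=\bm 0$ as well, since $H_{\bm 0}=1$, so no case split is needed. Then
\[
H_{\balpha+\bbeta} = \frac{(-1)^{\oalpha+\obeta}}{\varphi}\,\partial^{\balpha}\big((-1)^{\obeta}H_{\bbeta}\varphi\big) = \frac{(-1)^{\oalpha}}{\varphi}\,\partial^{\balpha}\big(H_{\bbeta}\,\varphi\big).
\]

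Apply Leibniz' formula to the product $H_{\bbeta}\cdot\varphi$. Note that the multi-index Leibniz rule carries the multinomial coefficients $\mbinom{\balpha}{\bm q}=\prod_i \mbinom{\alpha_i}{q_i}$, although (\ref{leibniz-rule}) as printed omits them. I will use the correct version, which follows by applying the one-variable Leibniz rule in each coordinate successively. This gives
\[
\partial^{\balpha}(H_{\bbeta}\varphi) = \sum_{\bm q\le\balpha}\mbinom{\balpha}{\bm q}\,\partial^{\bm q}(H_{\bbeta})\,\partial^{\balpha-\bm q}\varphi .
\]
Now replace $\partial^{\balpha-\bm q}\varphi$ with $(-1)^{\oalpha-|\bm q|}H_{\balpha-\bm q}\varphi$, again using the definition, including the degenerate case $\bm q=\balpha$. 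The factor $\varphi$ cancels against the prefactor $1/\varphi$, and the signs combine as $(-1)^{\oalpha}(-1)^{\oalpha-|\bm q|}=(-1)^{|\bm q|}$. This yields exactly the claimed identity.

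The only delicate points are bookkeeping. First, the edge cases $\oalpha=0$ or $\obeta=0$, where the definition switches branches; these are handled by noting that $\partial^{\bm 0}\varphi=\varphi$ is consistent with $H_{\bm 0}=1$. Second, the binomial coefficients in the multivariate Leibniz rule must be justified. I expect neither to be a real obstacle: the identity is essentially a one-line consequence of the Rodrigues-type definition.
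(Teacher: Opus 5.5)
Your proposal is correct and follows the paper's proof step for step: write $\partial^{\balpha+\bbeta}\varphi=\partial^{\balpha}(\partial^{\bbeta}\varphi)$, substitute $(-1)^{\obeta}\partial^{\bbeta}\varphi=H_{\bbeta}\varphi$, apply Leibniz, then substitute $(-1)^{\oalpha-|\bm q|}\partial^{\balpha-\bm q}\varphi=H_{\balpha-\bm q}\varphi$ and cancel $\varphi$. You are also right that the displayed Leibniz rule (\ref{leibniz-rule}) omits the coefficients $\prod_i\binom{\alpha_i}{q_i}$; the paper's proof uses the correct version with those coefficients without comment.
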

\begin{proof}
Recall that $\varphi: \R^n \rightarrow \R_{+}$ is the density of $\bm{\xi} \sim \mathcal{N}(\bm{0}, \Sigma).$ By definition of the multivariate Hermite polynomial, 

\begin{alignat*}{2}
H_{\balpha + \bbeta} &= (-1)^{\oalpha + \obeta} \thinspace \varphi^{-1} \thinspace \partial^{\balpha + \bbeta}(\varphi) = (-1)^{\oalpha + \obeta} \thinspace \varphi^{-1} \thinspace   \partial^{\balpha}(\partial^{\bbeta}(\varphi)) \hspace{-20pt} \hfill & \text{[Def.~\ref{mult_herm_def}]}  \\ 
&=  (-1)^{\oalpha} \thinspace \varphi^{-1} \thinspace     \partial^{\balpha}(H_{\bbeta} \thinspace \varphi) & \text{[$(-1)^{\obeta} \partial^{\bbeta}(\varphi)= H_{\bbeta} \thinspace \varphi$ by Def.~\ref{mult_herm_def}]} \\ 
&=(-1)^{\oalpha} \thinspace \varphi^{-1}  \sum_{\bm{q} \le \balpha} \mbinom{\balpha}{\bm{q}} \partial^{\bm{q}}(H_{\bbeta}) \thinspace \partial^{\balpha - \bm{q}}(\varphi) & \text{[Leibniz' product rule (Eq.~\ref{leibniz-rule})]} \\
&= \varphi^{-1}  \sum_{\bm{q} \le \balpha} \mbinom{\balpha}{\bm{q}} \partial^{\bm{q}}(H_{\bbeta})  (-1)^{|\bm{q}|} \left[ (-1)^{\oalpha - |\bm{q}|} \partial^{\balpha - \bm{q}}(\varphi) \right]  \\
&= \varphi^{-1} \sum_{\bm{q} \le \balpha} \mbinom{\balpha}{\bm{q}}\partial^{\bm{q}}(H_{\bbeta})  (-1)^{|\bm{q}|}  \left[ H_{\balpha - \bm{q}} \thinspace \varphi \right]   \hspace{-20pt} & \text{[$(-1)^{\oalpha - |\bm{q}|} \partial^{\balpha - \bm{q}}(\varphi ) = H_{\balpha - \bm{q}}\thinspace \varphi$ ]} \\
&=  \sum_{\bm{q} \le \balpha}  (-1)^{|\bm{q}|} \mbinom{\balpha}{\bm{q}} H_{\balpha - \bm{q}} \thinspace \partial^{\bm{q}}(H_{\bbeta}). 
\end{alignat*}
\end{proof}
With Lemma \ref{multivar_formulae}, we can prove the following corollary, which will be essential for constructing a formula for the higher-order derivative of a MHP $\partial^{\A}(H_{\bbeta})$ in Theorem \ref{deriv_formula}.

\begin{lemma}\label{sum_corollary}Let $\A, \bbeta \in \N_0^n$ such that $|\A| \le \obeta.$ Define
\[
\mathcal{S}_1 \equiv \sum_{\substack{\B \le \bbeta \\ |\B| < |\A|}} (-1)^{|\B|} \mbinom{\bbeta}{\B} H_{\bbeta - \B} \thinspace\partial^{\B} (H_{\A})  \text{ and }  \mathcal{S}_2 := \sum_{\bm{P} < \A} (-1)^{|\bm{P}|} \mbinom{\A}{\bm{P}} H_{\A - \bm{P}} \thinspace \partial^{\bm{P}} (H_{\bbeta}).
\]
Then 
\[ \mathcal{S}_1 + (-1)^{|\A|}\presumneg \sum_{\substack{\B \le \bbeta \\ |\B| = |\A|}} \langle H_{\A}, H_{\B}\rangle  \mbinom{\bbeta}{\B} \partial^{\B}(H_{\bbeta - \B}) =  \mathcal{S}_2 + (-1)^{|\A|} \partial^{\A} (H_{\bbeta}). \] 
\end{lemma}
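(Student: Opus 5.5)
The plan is to expand the single polynomial $H_{\A + \bbeta}$ in two different ways using Theorem \ref{sum_of_indices}, and then equate the results. Since $\A + \bbeta = \bbeta + \A$, applying the theorem once with the roles $(\balpha, \bbeta) \to (\bbeta, \A)$ and once with $(\balpha, \bbeta) \to (\A, \bbeta)$ gives
\[
\sum_{\B \le \bbeta} (-1)^{|\B|} \mbinom{\bbeta}{\B} H_{\bbeta - \B}\, \partial^{\B}(H_{\A}) \;=\; H_{\A+\bbeta} \;=\; \sum_{\bm{P} \le \A} (-1)^{|\bm{P}|} \mbinom{\A}{\bm{P}} H_{\A - \bm{P}}\, \partial^{\bm{P}}(H_{\bbeta}).
\]
Everything then reduces to sorting the terms on each side according to the order of the multi-index being differentiated.

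On the left, I will split the sum over $\B \le \bbeta$ into the three ranges $|\B| < |\A|$, $|\B| = |\A|$ and $|\B| > |\A|$.
\begin{itemize}
\item The range $|\B| > |\A|$ contributes nothing, because $\partial^{\B}(H_{\A}) = 0$ there by (\ref{deriv_too_big_0}).
\item The range $|\B| < |\A|$ is exactly $\mathcal{S}_1$.
\item In the range $|\B| = |\A|$, Theorem \ref{old_result} shows that $\partial^{\B}(H_{\A})$ is the constant $\langle H_{\A}, H_{\B}\rangle$. The sign $(-1)^{|\B|}$ becomes $(-1)^{|\A|}$, so this block becomes $(-1)^{|\A|}\sum_{\B \le \bbeta,\, |\B| = |\A|} \langle H_{\A}, H_{\B}\rangle \mbinom{\bbeta}{\B} H_{\bbeta - \B}$.
\end{itemize}
The hypothesis $|\A| \le \obeta$ is what makes this middle range potentially nonempty.

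On the right, I will isolate the top term $\bm{P} = \A$. Since $H_{\bm{0}} = 1$, it equals $(-1)^{|\A|}\partial^{\A}(H_{\bbeta})$. The remaining terms, those with $\bm{P} < \A$, form precisely $\mathcal{S}_2$. Equating the two sides then yields the claimed identity.

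The only delicate point, and the main obstacle, is matching the middle block to the displayed statement. The derivation naturally produces the polynomial $H_{\bbeta - \B}$ multiplying $\langle H_{\A}, H_{\B}\rangle\mbinom{\bbeta}{\B}$. The statement instead writes $\partial^{\B}(H_{\bbeta-\B})$, and I read this as a notational slip: $\partial^{\B}$ has already been applied to $H_{\A}$ and absorbed into the double product. I would therefore prove the identity with $H_{\bbeta-\B}$ in that position, and check the indexing conventions of the surrounding text to confirm this reading before finalizing.
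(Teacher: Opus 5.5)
Your proof is correct and follows the paper's argument exactly: you expand $H_{\A+\bbeta}$ both ways via Theorem \ref{sum_of_indices}, discard $|\B|>|\A|$ by (\ref{deriv_too_big_0}), replace $\partial^{\B}(H_{\A})$ by $\langle H_{\A},H_{\B}\rangle$ on $|\B|=|\A|$ via Theorem \ref{old_result}, and isolate the $\bm{P}=\A$ term. Your reading of the middle block is also right: the paper's own proof repeats the $\partial^{\B}(H_{\bbeta-\B})$ slip in (\ref{derivproof-eq1}), but that step actually produces $H_{\bbeta-\B}$, which is the form Theorem \ref{deriv_formula} relies on, and the literal version already fails for $n=1$, $\Sigma=1$, $\A=\bbeta=1$, where it would assert $\xi^2=\xi^2-1$.
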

\begin{proof}By the Summation of Indices formula (Theorem \ref{sum_of_indices}), we can decompose $H_{\A + \bbeta}$ as
\begin{equation}\label{halphabeta-sum1}
H_{\A + \bbeta} = \sum_{\B \le \bbeta} (-1)^{|\B|} \mbinom{\bbeta}{\B}H_{\bbeta - \B} \thinspace \partial^{\B}(H_{\A})
\end{equation}
or as
\begin{equation}\label{halphabeta-sum2}
H_{\A + \bbeta} = \sum_{\bm{P} \le \A} (-1)^{|\bm{P}|} \mbinom{\A}{\bm{P}} H_{\A - \bm{P}} \thinspace \partial^{\bm{P}} (H_{\bbeta})
\end{equation}
by interchanging $\A$ and $\bbeta$. Note that $\partial^{\B} H_{\A} = 0$ when $|\B| > |\A|$ by (\ref{deriv_too_big_0}). Thus, Eq.~\ref{halphabeta-sum1} becomes

\begin{align} 
H_{\A + \bbeta} &=  \sum_{\substack{\B \le \bbeta \\ |\B| \le |\A|}} (-1)^{|\B|} \mbinom{\bbeta}{\B} H_{\bbeta - \B} \thinspace \partial^{\B} (H_{\A}) \\
&= \sum_{\substack{\B \le \bbeta \\ |\B| < |\A|}} (-1)^{|\B|} \mbinom{\bbeta}{\B} H_{\bbeta - \B} \thinspace\partial^{\B} H_{\A}  + (-1)^{|\A|}\presumneg \sum_{\substack{\B \le \bbeta \\ |\B| = |\A|}} \langle H_{\A}, H_{\B}\rangle  \mbinom{\bbeta}{\B} \partial^{\B} (H_{ \bbeta - \B}) \label{derivproof-eq1} \\
&= \mathcal{S}_1 + (-1)^{|\A|}\presumneg \sum_{\substack{\B \le \bbeta \\ |\B| = |\A|}} \langle H_{\A}, H_{\B}\rangle  \mbinom{\bbeta}{\B} \partial^{\B}(H_{\bbeta - \B}).\label{derivproof-eq2}
\end{align}
In the second summation of Eq.~\ref{derivproof-eq1}, we have used Thm.~\ref{old_result} (i.e., $ \langle H_{\A}, H_{\B}\rangle  =\partial^{\B} H_{\A}$ when $|\A| = |\B|$). Similarly, we isolate the last term in Eq.~\ref{halphabeta-sum2},
\begin{align}
H_{\A + \bbeta} &= \sum_{\bm{P} < \A} (-1)^{|\bm{P}|} \mbinom{\A}{\bm{P}} H_{\A - \bm{P}} \thinspace \partial^{\bm{P}} H_{\bbeta} + (-1)^{|\A|} \partial^{\A} (H_{\bbeta})
\\
&= \mathcal{S}_2 + (-1)^{|\A|} \partial^{\A} H_{\bbeta} \label{final_eq_RHS}
\end{align}
noting that $H_{\bm{0}} = 1.$ By equating (\ref{derivproof-eq2}) and (\ref{final_eq_RHS}), the result follows. 
\end{proof}
By utilizing Lemma \ref{sum_corollary} and its notation, we show inductively that the $\A$th partial derivative of $H_{\bbeta}$ for $\A, \bbeta \in \N_0^n$ can be expressed as a linear combination of lower-order MHPs.

\begin{theorem}[Higher order derivative of MHP]\label{deriv_formula}Let $\A, \bbeta \in \N_0^n$ such that $|\A| \le \obeta.$ Then 
\[ \partial^{\A} (H_{\bbeta}) = \sum_{\substack{\B \le \bbeta \\ |\B| = |\A|}} \mbinom{\bbeta}{\B}  \langle H_{\A}, H_{\B} \rangle \thinspace H_{\bbeta - \B}. \] 
\end{theorem}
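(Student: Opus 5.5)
The plan is to prove the statement by strong induction on $|\A|$, holding it simultaneously for every $\bbeta$ with $|\A| \le \obeta$. The engine of the induction is Lemma \ref{sum_corollary}. In the base case $|\A| = 0$ we have $\A = \bm{0}$, and the only $\B \le \bbeta$ with $|\B| = 0$ is $\B = \bm{0}$. Since $\langle H_{\bm 0}, H_{\bm 0}\rangle = 1$, the right-hand side reduces to $H_{\bbeta} = \partial^{\bm 0} H_{\bbeta}$, so the base case holds.

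For the inductive step I will rearrange Lemma \ref{sum_corollary} to isolate $\partial^{\A}(H_{\bbeta})$. In the lemma, the term multiplying $\langle H_{\A},H_{\B}\rangle\binom{\bbeta}{\B}$ comes from writing $\partial^{\B}(H_{\A}) = \langle H_{\A}, H_{\B}\rangle$ (Theorem \ref{old_result}) inside Eq.~(\ref{halphabeta-sum1}). That term should therefore be read as multiplying $H_{\bbeta-\B}$; I will rederive it that way from (\ref{halphabeta-sum1}) rather than rely on the printed form. Multiplying through by $(-1)^{|\A|}$ then gives
\[
\partial^{\A}(H_{\bbeta}) = \sum_{\substack{\B \le \bbeta \\ |\B| = |\A|}} \mbinom{\bbeta}{\B}\langle H_{\A},H_{\B}\rangle H_{\bbeta-\B} + (-1)^{|\A|}\left(\mathcal{S}_1 - \mathcal{S}_2\right).
\]
The whole proof thus reduces to showing $\mathcal{S}_1 = \mathcal{S}_2$.

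To compare the two sums, I will expand the derivatives in each using the induction hypothesis. In $\mathcal{S}_1$ each term contains $\partial^{\B}(H_{\A})$ with $|\B| < |\A|$. The hypothesis applies to the pair $(\B,\A)$, giving
\[
\partial^{\B} H_{\A} = \sum_{\bm C \le \A,\ |\bm C| = |\B|} \mbinom{\A}{\bm C}\langle H_{\B},H_{\bm C}\rangle H_{\A-\bm C}.
\]
In $\mathcal{S}_2$, the condition $\bm P < \A$ (meaning $\bm P \le \A$, $\bm P \neq \A$) is equivalent to $\bm P \le \A$ with $|\bm P| < |\A| \le \obeta$. Hence the hypothesis applies to $\partial^{\bm P}H_{\bbeta}$, giving
\[
\partial^{\bm P} H_{\bbeta} = \sum_{\bm Q \le \bbeta,\ |\bm Q| = |\bm P|}\mbinom{\bbeta}{\bm Q}\langle H_{\bm P},H_{\bm Q}\rangle H_{\bbeta-\bm Q}.
\]
Substituting these expansions, both $\mathcal{S}_1$ and $\mathcal{S}_2$ become sums over pairs $(\bm U,\bm V)$ with $\bm U \le \bbeta$, $\bm V \le \A$ and $|\bm U| = |\bm V| < |\A|$. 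In both, the summand is
\[
(-1)^{|\bm U|}\mbinom{\bbeta}{\bm U}\mbinom{\A}{\bm V}\langle H_{\bm U},H_{\bm V}\rangle\, H_{\bbeta-\bm U}H_{\A-\bm V},
\]
after relabeling $(\B,\bm C)\mapsto(\bm U,\bm V)$ in $\mathcal{S}_1$ and $(\bm Q,\bm P)\mapsto(\bm U,\bm V)$ in $\mathcal{S}_2$. Two facts make the summands coincide: $|\bm U| = |\bm V|$, so the signs agree, and the double product is symmetric. This gives $\mathcal{S}_1 = \mathcal{S}_2$ and closes the induction.

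The main obstacle is bookkeeping rather than anything conceptual. First, the induction must be set up on $|\A|$ uniformly over all admissible $\bbeta$, because $\mathcal{S}_1$ uses the hypothesis with the roles swapped (the derivative index $\B$ and the polynomial $\A$). Second, I must check that the index ranges match exactly, in particular that $\bm P<\A$ corresponds precisely to $|\bm P|<|\A|$ with $\bm P\le\A$. Third, I must make sure the equal-order term in Lemma \ref{sum_corollary} is applied with $H_{\bbeta-\B}$ itself, as it arises from (\ref{halphabeta-sum1}).
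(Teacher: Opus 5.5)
Your proposal is correct and follows the paper's proof essentially step for step: strong induction on $|\A|$, Lemma \ref{sum_corollary} to reduce everything to $\mathcal{S}_1 = \mathcal{S}_2$, and the induction hypothesis applied to $\partial^{\B}(H_{\A})$ and $\partial^{\bm P}(H_{\bbeta})$. The only cosmetic difference is that the paper expands $\mathcal{S}_1$ and recollapses the inner sum into $\mathcal{S}_2$, whereas you expand both into one common double sum. Your two bookkeeping remarks are also right: the hypothesis must be uniform over the polynomial index, since it is applied to $\partial^{\B}(H_{\A})$, and the equal-order term in Lemma \ref{sum_corollary} should carry $H_{\bbeta - \B}$ rather than $\partial^{\B}(H_{\bbeta-\B})$, as your rederivation from (\ref{halphabeta-sum1}) shows.
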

\begin{proof} 
We will induct on $|\A| \overset{\text{call}}{=} \ell.$ For $\ell = 0,$ we have 
\[   \sum_{\substack{\B \le \bbeta \\ |\B| = 0}} \mbinom{\bbeta}{\B}  \langle H_{\A}, H_{\B} \rangle H_{\bbeta - \B} = \mbinom{\bbeta}{\bm{0}} \langle H_{\bm{0}}, H_{\bm{0}} \rangle H_{\bbeta - \bm{0}} = \partial^{\bm{0}} (H_{\bbeta}) = \partial^{\A} (H_{\bbeta}), \] 
completing the base case. 

For the strong inductive step, assume for any $\bm{q} \in \N_0^n$ with $|\bm{q}| < \ell$ that 
\[ \partial^{\bm{q}}(H_{\bbeta}) = \sum_{\substack{\B \le \bbeta \\ |\B| = |\bm{q}|}} \mbinom{\bbeta}{\B}  \langle H_{\bm{q}}, H_{\B} \rangle \thinspace H_{\bbeta - \B}.\] 

Let $|\A| \le \obeta$ with $|\A| = \ell$. By Lemma \ref{sum_corollary} with its notation,
\[ \mathcal{S}_1 + (-1)^{|\A|}\presumneg \sum_{\substack{\B \le \bbeta \\ |\B| = |\A|}} \langle H_{\A}, H_{\B}\rangle  \mbinom{\bbeta}{\B} \partial^{\B}(H_{\bbeta - \B}) =  \mathcal{S}_2 + (-1)^{|\A|} \partial^{\A} (H_{\bbeta}). \] 
In this inductive step, we will show that $\mathcal{S}_1 =  \mathcal{S}_2$. From there, we will have 
\[ \partial^{\A} (H_{\bbeta}) = \presumneg \sum_{\substack{\B \le \bbeta \\ |\B| = |\A|}} \langle H_{\A}, H_{\B}\rangle  \mbinom{\bbeta}{\B} \partial^{\B}(H_{\bbeta - \B})\] 
for $|\A| = \ell,$ as desired. Observe that

\begin{alignat*}{2}
\mathcal{S}_1 &= \sum_{\substack{\B \le \bbeta \\ |\B| < \ell}} (-1)^{|\B|} \mbinom{\bbeta}{\B} H_{\bbeta - \B} \thinspace\partial^{\B}(H_{\A}) &\text{[def. of $\mathcal{S}_1$]} \hspace{160pt}\\
&= \sum_{\substack{\B \le \bbeta \\ |\B| < \ell}} (-1)^{|\B|} \mbinom{\bbeta}{\B} H_{\bbeta - \B} \sum_{\substack{\bm{P} \le \A \\ |\bm{P}| = |\B|}} \mbinom{\A}{\bm{P}} \langle H_{\bm{P}}, H_{\B} \rangle H_{\A - \bm{P}} &\text{[induc. for $|\B| < \ell$]}\hspace{160pt}  \\
&=  \sum_{\bm{P} \le \A } \sum_{\B \le \bbeta}  \one{|\bm{P}| = |\B|}  \one{|\B| < \ell} \thinspace (-1)^{|\B|} \mbinom{\A}{\bm{P}} \mbinom{\bbeta}{\B}  \langle H_{\bm{P}}, H_{\B} \rangle H_{\A - \bm{P}} H_{\bbeta - \B}  \\ 
&= \sum_{\bm{P} \le \A } \sum_{\B \le \bbeta}  \one{|\bm{P}| = |\B|}  \one{|\bm{P}| < \ell} \thinspace (-1)^{|\bm{P}|} \mbinom{\A}{\bm{P}} \mbinom{\bbeta}{\B}  \langle H_{\bm{P}}, H_{\B} \rangle H_{\A - \bm{P}} H_{\bbeta - \B}  &\text{[since $|\bm{P}| = |\B|$]} \hspace{160pt}\\
&=  \sum_{\bm{P} \le \A }  \one{|\bm{P}| < \ell} \thinspace (-1)^{|\bm{P}|} \mbinom{\A}{\bm{P}} H_{\A - \bm{P}} \sum_{\B \le \bbeta} \one{|\bm{P}| = |\B|} \mbinom{\bbeta}{\B}  \langle H_{\bm{P}}, H_{\B} \rangle H_{\bbeta - \B}  &\text{[rearranging]} \hspace{160pt}\\ 
&= \sum_{\substack{\bm{P} \le \A  \\ |\bm{P}| < |\A| = \ell} }  \thinspace (-1)^{|\bm{P}|} \mbinom{\A}{\bm{P}} H_{\A - \bm{P}} \sum_{\substack{\B \le \bbeta \\ |\B| = |\bm{P}|} }\mbinom{\bbeta}{\B}  \langle H_{\bm{P}}, H_{\B} \rangle H_{\bbeta - \B} \\
&=  \sum_{\substack{\bm{P} \le \A  \\ |\bm{P}| < |\A| = \ell} }  \thinspace (-1)^{|\bm{P}|} \mbinom{\A}{\bm{P}} H_{\A - \bm{P}} \thinspace \partial^{\bm{P}} (H_{\bbeta}) &\text{[induc.~for $|\bm{P}| < \ell$]}  \hspace{160pt} \\
&= \mathcal{S}_2. 
\end{alignat*}

This completes the inductive step. 
\end{proof} 

As a sanity check, we can recover the double product theorem in \cite{mydoubprodpaper} by taking the expectation on both sides. That is,
\begin{equation}\label{general_deriv_expectation}
 \mathbb{E}(\partial^{\A} H_{\bbeta}(\bxi))= \sum_{\substack{\B \le \bbeta \\ |\B| = |\A|}} \mbinom{\bbeta}{\B} \langle H_{\A}, H_{\B} \rangle \mathbb{E}(H_{\bbeta - \B}(\bxi))  = \langle H_{\A}, H_{\bbeta} \rangle  \qquad \text{for } |\A| \le \obeta,
 \end{equation}
since by (\ref{firstordermoments}), the only Hermite polynomial whose expected value is nonzero is $H_{\bm{0}}$ (and $\mathbb{E}(H_0(\bxi))= 1$)  --- meaning only the double product term $\langle H_{\A}, H_{\bbeta} \rangle$ survives.

\begin{lemma}\label{useful_comp} Let $\balpha$ be a multi-index over $[n].$ Then for any $i \in [n]$ such that $\alpha_i > 0,$ $H_{\balpha} = H_{\bm{\alpha - e_i}} H_{\bm{e_i}} -\partial^{\bm{e}_i} H_{\bm{\alpha - e_i}}.$
\end{lemma}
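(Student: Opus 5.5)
The plan is to apply the Sum of indices formula (Theorem \ref{sum_of_indices}) with the roles chosen so that the sum over $\bm{q}$ is as short as possible. Take the first index to be $\bm{e_i}$ and the second to be $\bm{\alpha - e_i}$, which is a valid multi-index because $\alpha_i > 0$. Their sum is $\bm{e_i} + (\bm{\alpha - e_i}) = \balpha$, so Theorem \ref{sum_of_indices} gives
\[ H_{\balpha} = \sum_{\bm{q} \le \bm{e_i}} (-1)^{|\bm{q}|} \mbinom{\bm{e_i}}{\bm{q}} H_{\bm{e_i} - \bm{q}} \thinspace \partial^{\bm{q}} (H_{\bm{\alpha - e_i}}). \]

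The only multi-indices $\bm{q} \le \bm{e_i}$ are $\bm{q} = \bm{0}$ and $\bm{q} = \bm{e_i}$, and both binomial coefficients $\mbinom{\bm{e_i}}{\bm{0}}$ and $\mbinom{\bm{e_i}}{\bm{e_i}}$ equal $1$. The $\bm{q} = \bm{0}$ term is $H_{\bm{e_i}} \thinspace H_{\bm{\alpha - e_i}}$, since $\partial^{\bm{0}}$ is the identity. The $\bm{q} = \bm{e_i}$ term is $(-1)^{1} H_{\bm{0}} \thinspace \partial^{\bm{e_i}} H_{\bm{\alpha - e_i}}$. Because $H_{\bm{0}} = 1$, this becomes $-\partial^{\bm{e_i}} H_{\bm{\alpha - e_i}}$, which yields the stated identity.

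There is no real obstacle; the only care needed is choosing the order of the two summands in Theorem \ref{sum_of_indices}. Putting $\bm{e_i}$ in the binomial slot keeps the sum to two terms. The reverse choice would produce a long sum involving $\partial^{\bm{q}} H_{\bm{e_i}}$, which would then have to be simplified using (\ref{deriv_too_big_0}) and Theorem \ref{old_result}. As an optional check, the reverse choice gives $H_{\bm{\alpha-e_i}}H_{\bm{e_i}} - \sum_{j}(\alpha-e_i)_j\,\Sigma^{-1}_{ij} H_{\bm{\alpha-e_i-e_j}}$. By Theorem \ref{deriv_formula} with $|\A| = 1$, this agrees with the result above.
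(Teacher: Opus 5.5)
Your proof is correct: specializing Theorem \ref{sum_of_indices} with $\bm{e_i}$ as the first index and $\bm{\alpha - e_i}$ as the second leaves only the terms $\bm{q} = \bm{0}$ and $\bm{q} = \bm{e_i}$, and these give exactly the stated identity. This is essentially the paper's argument: its one-line proof writes $H_{\balpha} = -\varphi^{-1}\partial^{\bm{e_i}}(H_{\bm{\alpha - e_i}}\varphi)$ and applies the product rule, which is precisely the $\bm{e_i}$ case of the computation that proves Theorem \ref{sum_of_indices}.
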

\begin{proof} This follows from differentiating the density $\varphi$ defined in (\ref{density}) directly.
\end{proof}   

\begin{lemma}\label{H_alph_deriv} Let $\balpha$ be a multi-index over $[n]$ such that $|\balpha| = k \ge 1.$ Then for any $j \in [n]$, $\partial^{\bm{e}_j} (H_{\balpha}) = \frac{\partial H_{\balpha}}{\partial \xi_j} = \sum_{i=1}^n \alpha_i \Sigma_{i j}^{-1} H_{\balpha- \bm{e_i}}. $ 
\end{lemma}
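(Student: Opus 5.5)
The quickest route is to specialize Theorem \ref{deriv_formula} to a first-order derivative. Take $\A = \bm{e}_j$, so $|\A| = 1 \le k = \oalpha$ and the hypothesis of Theorem \ref{deriv_formula} holds. The sum then runs over all $\B \le \balpha$ with $|\B| = 1$. These are exactly the unit vectors $\bm{e}_i$ with $\alpha_i \ge 1$. This gives
\[ \partial^{\bm{e}_j}(H_{\balpha}) = \sum_{\substack{i \in [n] \\ \alpha_i \ge 1}} \mbinom{\balpha}{\bm{e}_i} \langle H_{\bm{e}_j}, H_{\bm{e}_i} \rangle \, H_{\balpha - \bm{e}_i}. \]

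It remains to evaluate the two factors in each term. First, $\mbinom{\balpha}{\bm{e}_i} = \prod_{m} \mbinom{\alpha_m}{\delta_{mi}} = \alpha_i$. Second, I need $\langle H_{\bm{e}_j}, H_{\bm{e}_i}\rangle = \Sigma^{-1}_{ij}$. This follows from Table \ref{MHPs} (or directly from Definition \ref{mult_herm_def}), which gives $H_{\bm{e}_i}(\bxi) = (\Sigma^{-1}\bxi)_i$. Hence $\E(H_{\bm{e}_i}H_{\bm{e}_j}) = (\Sigma^{-1}\Sigma\Sigma^{-1})_{ij} = \Sigma^{-1}_{ij}$. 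Equivalently, Theorem \ref{old_result} gives $\partial^{\bm{e}_i} H_{\bm{e}_j} = \Sigma^{-1}_{ji}$, and $\Sigma^{-1}$ is symmetric. Finally, the terms with $\alpha_i = 0$ have coefficient $\alpha_i = 0$, so they can be added back freely. This extends the sum to all $i \in [n]$ and gives the stated formula. Writing $H_{\balpha - \bm{e}_i}$ for those terms is only formal, and their zero coefficient makes this harmless.

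As a cross-check that does not rely on Theorem \ref{deriv_formula}, one could argue by induction on $k$ using Lemma \ref{useful_comp}. Write $H_{\balpha} = H_{\balpha - \bm{e}_m}H_{\bm{e}_m} - \partial^{\bm{e}_m}H_{\balpha-\bm{e}_m}$, differentiate in $\xi_j$, use that derivatives commute, and apply the inductive hypothesis to $H_{\balpha - \bm{e}_m}$. This route is messier because of the bookkeeping when recombining terms, so I would present the specialization above as the proof.

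The only real subtlety is the identification $\langle H_{\bm{e}_i},H_{\bm{e}_j}\rangle = \Sigma^{-1}_{ij}$, together with the index convention in the indicator. Both are routine.
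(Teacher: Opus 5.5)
The paper states Lemma \ref{H_alph_deriv} without any proof, so there is no argument of the paper's to compare with. Your proof is correct, and given where the lemma sits (right after Theorem \ref{deriv_formula}), your specialization is the natural way to fill the gap.

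Taking $\A = \bm{e}_j$, $\bbeta = \balpha$ in Theorem \ref{deriv_formula} is not circular. That theorem is proved from Theorem \ref{sum_of_indices}, Lemma \ref{sum_corollary}, the degree bound (\ref{deriv_too_big_0}) and the cited Theorem \ref{old_result}. Lemma \ref{H_alph_deriv} is first used only later, in Theorem \ref{IBP_trip_prod}. Nor does the citation hide a dependence: Theorem \ref{old_result} is just repeated Gaussian integration by parts, $\langle H_{\balpha}, H_{\bbeta}\rangle = \int H_{\balpha}\,(-1)^{|\bbeta|}\partial^{\bbeta}\varphi \, d\x = \E(\partial^{\bbeta}H_{\balpha})$. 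Also, the statement of Theorem \ref{deriv_formula} is correct as written, even though the displays in Lemma \ref{sum_corollary} and at the end of that theorem's proof misprint $H_{\bbeta-\B}$ as $\partial^{\B}(H_{\bbeta-\B})$. Your evaluations $\mbinom{\balpha}{\bm{e}_i} = \alpha_i$ and $\langle H_{\bm{e}_j}, H_{\bm{e}_i}\rangle = (\Sigma^{-1}\Sigma\Sigma^{-1})_{ij} = \Sigma^{-1}_{ij}$ are right. The $\alpha_i = 0$ terms are formal zeros, as the statement implicitly assumes.

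The only cost of your main route is economy of means: a first-order identity is made to rest on the full induction behind Theorem \ref{deriv_formula}. Your sketched induction via Lemma \ref{useful_comp} is self-contained and does close. After applying the inductive hypothesis to $H_{\balpha-\bm{e}_m}$, each bracket $H_{\balpha-\bm{e}_m-\bm{e}_i}H_{\bm{e}_m} - \partial^{\bm{e}_m}H_{\balpha-\bm{e}_m-\bm{e}_i}$ collapses to $H_{\balpha-\bm{e}_i}$ by Lemma \ref{useful_comp}. The one case where that lemma does not apply ($i = m$, $\alpha_m = 1$) carries coefficient $(\balpha-\bm{e}_m)_m = 0$. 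The extra term $\Sigma^{-1}_{mj}H_{\balpha-\bm{e}_m}$ then restores the coefficient $\alpha_m$.

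Shortest of all is the generating function $\sum_{\balpha}H_{\balpha}(\x)\,\bm{t}^{\balpha}/\balpha! = \varphi(\x-\bm{t})/\varphi(\x) = \exp(\bm{t}^T\Sigma^{-1}\x - \tfrac12\bm{t}^T\Sigma^{-1}\bm{t})$. Its $x_j$-derivative is $\sum_i t_i\Sigma^{-1}_{ij}$ times itself, and comparing coefficients of $\bm{t}^{\balpha}$ gives the lemma directly, with no appeal to double products.
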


\begin{theorem}\label{IBP_trip_prod} Let $\balpha, \bbeta, \bgamma \in \N_0^n$. Then 
\begin{equation}\label{IBP_trip_prod_eq}
\E(H_{\balpha}(\bxi) \partial^{\bbeta} H_{\bgamma} (\bxi)) = \E(\partial^{\balpha + \bbeta}H_{\bgamma}(\bxi)).
\end{equation}
\end{theorem}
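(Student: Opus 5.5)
The plan is to prove \eqref{IBP_trip_prod_eq} by Gaussian integration by parts, moving the derivatives that define $H_{\balpha}$ off the density and onto $\partial^{\bbeta} H_{\bgamma}$. By Definition~\ref{mult_herm_def}, $H_{\balpha}(\x)\varphi(\x) = (-1)^{\oalpha}\partial^{\balpha}\varphi(\x)$ when $\oalpha \ge 1$. (When $\oalpha = 0$ the identity is trivial, since $H_{\bm 0}=1$.) Hence
\[
\E\big(H_{\balpha}(\bxi)\,\partial^{\bbeta}H_{\bgamma}(\bxi)\big) = (-1)^{\oalpha}\int_{\R^n} \partial^{\balpha}\varphi(\x)\,\partial^{\bbeta}H_{\bgamma}(\x)\,d\x .
\]

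Next, integrate by parts $\oalpha$ times, one coordinate derivative at a time. Each application of $\int \partial_i u\, v\, d\x = -\int u\,\partial_i v\, d\x$ contributes a factor of $-1$. The total sign $(-1)^{\oalpha}$ therefore cancels the prefactor, giving
\[
\int_{\R^n} \varphi(\x)\,\partial^{\balpha}\partial^{\bbeta}H_{\bgamma}(\x)\,d\x = \E\big(\partial^{\balpha+\bbeta}H_{\bgamma}(\bxi)\big).
\]
Here the partial derivatives commute because $H_{\bgamma}$ is a polynomial, so $\partial^{\balpha}\partial^{\bbeta} = \partial^{\balpha+\bbeta}$.

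To organize this cleanly, I would induct on $\oalpha$. Write $\balpha = \balpha' + \bm{e}_i$ and peel off one $\partial_i$ from $\partial^{\balpha}\varphi = \partial_i(\partial^{\balpha'}\varphi)$. The inductive hypothesis is applied to $\balpha'$ with $\bbeta$ replaced by $\bbeta + \bm{e}_i$. Equivalently, the step reads $\E(H_{\balpha}\,\partial^{\bbeta}H_{\bgamma}) = \E(H_{\balpha'}\,\partial^{\bbeta+\bm{e}_i}H_{\bgamma})$, which follows from a single integration by parts in $x_i$ (via Fubini).

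The only real obstacle is justifying that the boundary terms vanish. Each boundary term has the form of a polynomial times $\partial^{\balpha'}\varphi = (-1)^{|\balpha'|}H_{\balpha'}\varphi$, i.e.\ a polynomial times $\exp(-\tfrac12 \x^T\Sigma^{-1}\x)$. Since $\Sigma^{-1}$ is SPD, we have $\x^T\Sigma^{-1}\x \ge \lambda_{\min}(\Sigma^{-1})\|\x\|^2$, so these terms decay faster than any polynomial grows as $|x_i|\to\infty$. The same bound makes all integrands absolutely integrable, which licenses Fubini.

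As a consistency check, the result agrees with \eqref{general_deriv_expectation} when $\bbeta = \bm 0$ and $\oalpha = \ogamma$. It is also trivially $0$ when $\oalpha + \obeta > \ogamma$, by \eqref{deriv_too_big_0}.
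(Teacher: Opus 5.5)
Your proof is correct, but it takes a different route from the paper's.

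\textbf{Your route.} You induct on $\oalpha$ and integrate by parts directly against the Gaussian. Writing $H_{\balpha}\varphi = (-1)^{\oalpha}\partial^{\balpha}\varphi$ and moving one $\partial_i$ at a time off the density gives $\E(H_{\balpha}\partial^{\bbeta}H_{\bgamma}) = \E(H_{\balpha-\bm{e}_i}\partial^{\bbeta+\bm{e}_i}H_{\bgamma})$. The boundary terms and Fubini are controlled by $\x^T\Sigma^{-1}\x \ge \lambda_{\min}(\Sigma^{-1})\|\x\|^2$.

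\textbf{The paper's route.} The paper inducts on $\obeta$ and never integrates by parts inside this proof. Its base case $\bbeta=\bm{0}$ is the identity $\langle H_{\balpha},H_{\bgamma}\rangle = \E(\partial^{\balpha}H_{\bgamma})$, imported from Theorem~\ref{old_result} and Eq.~\eqref{general_deriv_expectation}. Those results only cover $\oalpha \le \ogamma$, so the case $\oalpha > \ogamma$ tacitly also needs weak orthogonality together with \eqref{deriv_too_big_0}. The inductive step is pure linearity. By Lemma~\ref{H_alph_deriv}, $\partial^{\bm{e}_j}H_{\bgamma} = \sum_i \gamma_i \Sigma^{-1}_{ij} H_{\bgamma-\bm{e}_i}$, so one derivative is absorbed into lower-order indices and the hypothesis is applied to each $H_{\bgamma-\bm{e}_i}$.

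\textbf{What each buys.} The paper's argument is purely algebraic once earlier results are granted. However, it relies on the double-product identity from prior work and on Lemma~\ref{H_alph_deriv}, which the paper states without proof. Your argument is self-contained: its $\bbeta=\bm{0}$ case actually proves the ingredient the paper imports. It is also more general, since it uses only that $\partial^{\bbeta}H_{\bgamma}$ is a polynomial. It therefore shows $\E(H_{\balpha}(\bxi)f(\bxi)) = \E(\partial^{\balpha}f(\bxi))$ for every polynomial $f$. The price is the analytic bookkeeping (boundary-term decay and absolute integrability), and you have handled that adequately.
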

\begin{proof}
We proceed by induction on the size of the derivative $\obeta$ on the left-hand side of Equation \ref{IBP_trip_prod_eq}. 

For the base case, let $\obeta = 0$. Then 
\begin{alignat*}{2}
 \E(H_{\balpha}(\bxi) \partial^{\bbeta} H_{\bgamma} (\bxi)) &= \E( H_{\balpha}(\bxi) H_{\bgamma}(\bxi)) &\text{[since $\obeta = 0$]}  \\
&= \langle H_{\balpha}, H_{\bgamma} \rangle \hfill &\text{[def.~of double product]} \\
&=  \mathbb{E}(\partial^{\balpha} H_{\bgamma}(\bxi)) \\ 
&= \mathbb{E}(\partial^{\balpha + \bbeta} H_{\bgamma}(\bxi)) &\text{[since $\obeta = 0$]}  
 \end{alignat*}
 which completes the base case. 
 
 For the inductive step, assume $\E(H_{\balpha}(\bxi) \partial^{\bbeta} H_{\bgamma} (\bxi)) = \E(\partial^{\balpha + \bbeta}H_{\bgamma}(\bxi))$ for any $\balpha, \bbeta, \bgamma \in \N_0^n$ such that $\bbeta = b > 0$. Consider some $\tilde{\bm{\beta}} \in \N_0^n$ such that $|\tilde{\bm{\beta}}| = b + 1$. Then $\tilde{\bm{\beta}} = \bbeta + e_j$ for some $\obeta = b$ and $j \in [n].$ Observe that
 \begin{alignat*}{2}
H_{\balpha} \partial^{\tilde{\bbeta}} H_{\bgamma}  &=  H_{\balpha} \partial^{\bbeta + e_j} H_{\bgamma}  \qquad &\text{[since $\tilde{\bm{\beta}} = \bbeta + e_j$]} \\
 &= H_{\balpha} \partial^{\bbeta} \partial^{e_j} (H_{\bgamma}) \\
 &= H_{\balpha} \partial^{\bbeta} \left( \sum_{i=1}^n \gamma_i \Sigma_{ij}^{-1} H_{\bgamma - e_j} \right) \qquad &\text{[by Lemma \ref{H_alph_deriv}]} \\
 &=  H_{\balpha}  \sum_{i=1}^n \gamma_i \Sigma_{ij}^{-1} \partial^{\bbeta} (H_{\bgamma - e_j}) \\ 
 &=  \sum_{i=1}^n \gamma_i \Sigma_{ij}^{-1} H_{\balpha} \partial^{\bbeta} (H_{\bgamma - e_j})
 \end{alignat*}
 Thus, 
 \begin{alignat*}{2}
 \E (H_{\balpha}(\bxi) \partial^{\tilde{\bbeta}} H_{\bgamma} (\bxi) ) &= \E \left( \sum_{i=1}^n \gamma_i \Sigma_{ij}^{-1} H_{\balpha} (\bxi) \partial^{\bbeta} (H_{\bgamma - e_j} (\bxi)) \right) \\
 &=  \sum_{i=1}^n \gamma_i \Sigma_{ij}^{-1}\E (H_{\balpha} (\bxi) \partial^{\bbeta} (H_{\bgamma - e_j} (\bxi))) \\
 &= \sum_{i=1}^n \gamma_i \Sigma_{ij}^{-1}\E ( \partial^{\balpha + \bbeta} (H_{\bgamma - e_j}(\bxi))) &\text{[inductive hypothesis]} \\
 &= \E \left( \partial^{\balpha + \bbeta} \left( \sum_{i=1}^n \gamma_i \Sigma_{ij}^{-1}  H_{\bgamma - e_j}(\bxi) \right) \right) &\text{[linearity]} \\
 &= \E (  \partial^{\balpha + \bbeta} \partial^{e_j} H_{\bgamma}(\bxi) ) \qquad &\text{[by Lemma \ref{H_alph_deriv}]} \\
 &=  \E( \partial^{\balpha + \tilde{\bbeta}} H_{\bgamma}(\bxi) ) &\text{[since $\tilde{\bm{\beta}} = \bbeta + e_j$]}
 \end{alignat*}
 which completes the inductive step. Hence, $\E(H_{\balpha}(\bxi) \partial^{\bbeta} H_{\bgamma} (\bxi)) = \E(\partial^{\balpha + \bbeta}H_{\bgamma}(\bxi))$ for any $\balpha, \bbeta, \bgamma \in \N_0^n.$ 
\end{proof}


\section{Triple Product Formula}\label{trip_prod_form_sect}

In this section, we consider the special $N = 3$ case to yield a formula for the triple product.
\begin{theorem}[Triple product]\label{trip_prod} Let $\balpha, \bbeta, \bgamma \in \N_0^n$ such that $\ogamma = \text{max}(\oalpha, \obeta, \ogamma).$ If 
\begin{enumerate}
\item[(a)] $\oalpha + \obeta < \ogamma$ or 
\item[(b)] $\oalpha + \obeta + \ogamma$ is odd,
\end{enumerate}
then the triple product $\langle H_{\balpha}, H_{\bbeta}, H_{\bgamma} \rangle$ = 0. Otherwise, if $|\bgamma| \le \oalpha + |\bbeta|$ and $\oalpha + \obeta + \ogamma$ is even, define $k := \frac{\oalpha + |\bbeta| - |\bgamma|}{2}.$ Then 
\begin{equation}\label{trip_prod_eq}
\langle H_{\balpha}, H_{\bbeta}, H_{\bgamma} \rangle =  \hspace{-10pt} \sum_{\substack{\A \thinspace \le \balpha \\ \B \le \bbeta \\ |\A| = |\B| = k}} \mbinom{\balpha}{\A} \mbinom{\bbeta}{\B} \langle H_{\A}, H_{\B} \rangle \langle H_{\bm{\alpha + \beta - A - B}}, H_{\bgamma} \rangle.
\end{equation}
Note that if $k = 0,$ the above (Equation \ref{trip_prod_eq}) gives that $\langle H_{\balpha}, H_{\bbeta}, H_{\bgamma} \rangle  = \langle H_{\balpha + \bbeta}, H_{\bgamma} \rangle.$
\end{theorem}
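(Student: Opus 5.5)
The plan is to linearize the product $H_{\balpha} H_{\bbeta}$ as a combination of single MHPs and then apply weak orthogonality (\ref{weak_orthog}) against $H_{\bgamma}$. The natural tool is the sum of indices formula (Theorem \ref{sum_of_indices}) combined with the derivative formula (Theorem \ref{deriv_formula}). Applying Theorem \ref{sum_of_indices} with roles chosen appropriately, and substituting Theorem \ref{deriv_formula} for each $\partial^{\bm{q}} H_{\bbeta}$, expresses $H_{\balpha+\bbeta}$ as
\[ H_{\balpha+\bbeta} = \sum_{\bm{q}\le\balpha}\sum_{\substack{\B\le\bbeta\\ |\B|=|\bm{q}|}} (-1)^{|\bm{q}|}\mbinom{\balpha}{\bm{q}}\mbinom{\bbeta}{\B}\langle H_{\bm{q}},H_{\B}\rangle H_{\balpha-\bm{q}}H_{\bbeta-\B}. \]
The $\bm{q}=\bm{0}$ term is $H_{\balpha}H_{\bbeta}$. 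I would then invert this relation, proving by strong induction on $\min(\oalpha,\obeta)$ the product (linearization) formula
\[ H_{\balpha}H_{\bbeta} = \sum_{\substack{\A\le\balpha,\ \B\le\bbeta\\ |\A|=|\B|}} \mbinom{\balpha}{\A}\mbinom{\bbeta}{\B}\langle H_{\A},H_{\B}\rangle H_{\balpha+\bbeta-\A-\B}. \]
This is the multivariate analogue of the classical Hermite linearization, with $\langle H_{\A},H_{\B}\rangle$ playing the role of $j!\,\delta$.

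The inductive step is the main obstacle. Substituting the inductive hypothesis for each product $H_{\balpha-\bm{q}}H_{\bbeta-\B}$ with $|\bm{q}|\ge1$ gives double sums with an alternating sign. They should collapse via a Vandermonde-type identity
\[ \sum_{\A'\le \A}\mbinom{\A}{\A'}(-1)^{|\A'|}=\delta_{\A,\bm 0}, \]
together with a convolution identity for the double products:
\[ \sum_{\bm{P}+\bm{P}'=\A,\ \B'+\B''=\B}\mbinom{\A}{\bm{P}}\mbinom{\B}{\B'}\langle H_{\bm{P}},H_{\B'}\rangle\langle H_{\bm{P}'},H_{\B''}\rangle \]
equals a multiple of $\langle H_{\A},H_{\B}\rangle$ only after the correct bookkeeping. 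I am not certain this combinatorial route is cleanest.

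An alternative, which I would try first, avoids the inversion by using Theorem \ref{IBP_trip_prod}. Apply Lemma \ref{useful_comp} and Lemma \ref{H_alph_deriv} and induct on $\oalpha$. Writing $H_{\balpha}=H_{\balpha-\bm e_i}H_{\bm e_i}-\partial^{\bm e_i}H_{\balpha-\bm e_i}$, the triple product $\E(H_{\balpha}H_{\bbeta}H_{\bgamma})$ reduces to triple products with $\oalpha$ lowered by one. The $H_{\bm e_i}$ factor is moved onto $H_{\bbeta}H_{\bgamma}$ as a derivative by Gaussian integration by parts, which is Theorem \ref{IBP_trip_prod} applied with $H_{\bm e_i}$. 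The derivatives are expanded by Lemma \ref{H_alph_deriv}, and the inductive hypothesis is then applied. This mirrors the one-dimensional recurrence $xH_n=H_{n+1}+nH_{n-1}$ and the Leibniz bookkeeping is routine.

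Once the product formula is available, taking $\langle\cdot,H_{\bgamma}\rangle$ kills every term with $|\balpha+\bbeta-\A-\B|=\oalpha+\obeta-2|\A|\neq\ogamma$ by (\ref{weak_orthog}). Only $|\A|=|\B|=k$ survives, giving (\ref{trip_prod_eq}). The vanishing cases follow at once. If $\oalpha+\obeta+\ogamma$ is odd, no integer $k$ exists, so no term survives. If $\oalpha+\obeta<\ogamma$, then $k<0$ and again no term survives; alternatively, $H_{\balpha}H_{\bbeta}$ has degree below $\ogamma$, and Theorem \ref{IBP_trip_prod} together with (\ref{deriv_too_big_0}) gives zero. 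The case $k=0$ leaves only $\A=\B=\bm 0$, which yields $\langle H_{\balpha+\bbeta},H_{\bgamma}\rangle$.
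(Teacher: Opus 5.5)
Your plan is sound, and it takes a genuinely different route from the paper's. The paper neither linearizes $H_{\balpha}H_{\bbeta}$ nor runs a new induction. It transfers all of $H_{\balpha}$ at once, $\langle H_{\balpha},H_{\bbeta},H_{\bgamma}\rangle=\E[\partial^{\balpha}(H_{\bbeta}H_{\bgamma})]$, splits $\partial^{\balpha}$ by Leibniz, and rewrites each $\partial^{\A}H_{\bbeta}$ by Theorem~\ref{deriv_formula}. It then folds $H_{\bbeta-\B}$ back in with Theorem~\ref{IBP_trip_prod}, $\E[H_{\bbeta-\B}\,\partial^{\balpha-\A}H_{\bgamma}]=\langle H_{\balpha+\bbeta-\A-\B},H_{\bgamma}\rangle$. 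This gives the unrestricted sum over all $\A\le\balpha$, $\B\le\bbeta$ with $|\A|=|\B|$ in a few lines, and weak orthogonality then selects $|\A|=k$ exactly as in your last paragraph. The inductions live in Theorems~\ref{deriv_formula} and~\ref{IBP_trip_prod}.

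Your second route instead inducts on the triple product itself. One integration by parts gives, for any $i$ with $\alpha_i>0$,
\[ \langle H_{\balpha},H_{\bbeta},H_{\bgamma}\rangle=\sum_{j=1}^n\Sigma^{-1}_{ij}\big(\beta_j\langle H_{\balpha-\bm{e}_i},H_{\bbeta-\bm{e}_j},H_{\bgamma}\rangle+\gamma_j\langle H_{\balpha-\bm{e}_i},H_{\bbeta},H_{\bgamma-\bm{e}_j}\rangle\big). \]
Do that integration by parts directly via $H_{\bm{e}_i}\varphi=-\partial_i\varphi$, since Theorem~\ref{IBP_trip_prod} as stated treats a single $H_{\bgamma}$, not a product. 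The bookkeeping is then indeed routine once you do three things:
(i) take the unrestricted sum as the inductive hypothesis, because the reduced triples can violate $\ogamma=\max(\oalpha,\obeta,\ogamma)$;
(ii) split $\mbinom{\balpha}{\A}=\mbinom{\balpha-\bm{e}_i}{\A-\bm{e}_i}+\mbinom{\balpha-\bm{e}_i}{\A}$;
(iii) apply the $\bbeta=\bm{0}$ case of the recursion, $\langle H_{\bm{\delta}},H_{\bgamma}\rangle=\sum_j\gamma_j\Sigma^{-1}_{ij}\langle H_{\bm{\delta}-\bm{e}_i},H_{\bgamma-\bm{e}_j}\rangle$ for $\delta_i\ge1$. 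Use it on $\langle H_{\A},H_{\B}\rangle$ in the first piece, with $B_j\mbinom{\bbeta}{\B}=\beta_j\mbinom{\bbeta-\bm{e}_j}{\B-\bm{e}_j}$, and on $\langle H_{\balpha+\bbeta-\A-\B},H_{\bgamma}\rangle$ in the second.
This proves the unrestricted triple identity directly, which is all your final paragraph needs.

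Your first route also closes. The convolution identity you were unsure of holds exactly: $\sum_{\B'\le\B}\mbinom{\B}{\B'}\langle H_{\bm{P}},H_{\B'}\rangle\langle H_{\A-\bm{P}},H_{\B-\B'}\rangle=\langle H_{\A},H_{\B}\rangle$ for each fixed $\bm{P}\le\A$. To see it, expand $\E[\partial^{\bm{P}}\partial^{\A-\bm{P}}H_{\B}]$ with Theorem~\ref{deriv_formula} and (\ref{general_deriv_expectation}). Combined with $\sum_{\bm{P}\le\A}\mbinom{\A}{\bm{P}}(-1)^{|\bm{P}|}=\delta_{\A,\bm{0}}$, it finishes your inductive step.

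As for what each approach buys: the paper's argument is the shortest given its lemmas. Your recursion needs only the first-order Lemmas~\ref{useful_comp} and~\ref{H_alph_deriv} and is directly implementable. Your linearization formula is an identity of polynomials rather than of expectations, so it can be iterated toward the $N$th-order moments the paper leaves open.
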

\begin{proof} 
Observe that
\begin{alignat*}{2} &\hphantom{=} \langle H_{\balpha}, H_{\bbeta}, H_{\bgamma}  \rangle \\
&= \mathbb{E}_{\bxi}[\partial^{\balpha} (H_{\bbeta}(\bxi) H_{\bgamma}(\bxi))]  \\ 
&= \int_{\R^n} \partial^{\balpha}\left(H_{\bbeta}(\x) H_{\bgamma}(\x) \right)\varphi(\x) \thinspace d \x  &\text{[def.~of expectation]}\\
&= \sum_{\A \le \balpha} \mbinom{\balpha}{\A} \int \partial^{\A} (H_{\bbeta}(\bm{x})) \partial^{\balpha - \A}(H_{\bgamma}(\x) ) \varphi(\bm{x}) \thinspace d \bm{x}   \hspace{50pt} & \text{[Leibniz (Eq.~\ref{leibniz-rule})]}   \\
&= \sum_{\A \le \balpha} \sum_{\substack{\B \le \bbeta \\ |\B| = |\A|}} \mbinom{\balpha}{\A} \mbinom{\bbeta}{\B} \langle H_{\A}, H_{\B} \rangle \hspace{-4pt}  \int \hspace{-4pt} H_{\bbeta - \B}(\bm{x}) \thinspace \partial^{\balpha - \A}\left(H_{\bgamma}(\x) \right) \varphi(\bm{x}) \thinspace d \bm{x} \hspace{-25pt} & \text{[Thm.~\ref{deriv_formula}]} \\
&= \sum_{\A \le \balpha} \sum_{\substack{\B \le \bbeta \\ |\B| = |\A|}} \mbinom{\balpha}{\A} \mbinom{\bbeta}{\B} \langle H_{\A}, H_{\B} \rangle \thinspace \mathbb{E}_{\bxi} [ H_{\bbeta - \B}(\bxi) \partial^{\balpha - \A}(H_{\bgamma}(\bxi) ) ]  &\text{[def.~of expectation]}\\
&= \sum_{\A \le \balpha} \sum_{\substack{\B \le \bbeta \\ |\B| = |\A|}} \mbinom{\balpha}{\A} \mbinom{\bbeta}{\B} \langle H_{\A}, H_{\B} \rangle \thinspace \mathbb{E}_{\bxi} \left[ \partial^{\balpha + \bbeta - \A - \B} (H_{\bgamma}(\bxi) ) \right]  & \text{[Theorem \ref{IBP_trip_prod}]} \\
&= \sum_{\A \le \balpha} \sum_{\substack{\B \le \bbeta \\ |\B| = |\A|}} \mbinom{\balpha}{\A} \mbinom{\bbeta}{\B} \langle H_{\A}, H_{\B} \rangle \langle H_{\balpha + \bbeta - \A - \B}, H_{\bgamma} \rangle. & \text{[Theorem \ref{old_result}]} 
\end{alignat*}

\bigskip
For condition (a), assume $\oalpha + \obeta \ge \ogamma.$ Without loss of generality, assume $\oalpha \le \obeta$; the proof is symmetric in the other case. Any $\A, \B$ in the summation for which $|\balpha + \bbeta - \A - \B | \neq \ogamma$ will have their corresponding double product term $\langle H_{\balpha + \bbeta - \A - \B}, H_{\bgamma} \rangle = 0$. Observe that, since $| \A | = | \B|$ in the summation,
\begin{align}\label{nonzero_terms}
|\balpha + \bbeta - \A - \B | = \ogamma \Longleftrightarrow |\balpha + \bbeta| - 2 |\A| = \ogamma \Longleftrightarrow |\A| = \textstyle\frac{|\balpha| + |\bbeta| - |\bgamma|}{2} = k.
\end{align}
If $\oalpha + \obeta + \ogamma$ were odd (so condition (b) holds), then $\oalpha + \obeta$ and $\ogamma$ would have different parities, so $|\balpha + \bbeta| - 2 |\A| \neq \ogamma$ (accordingly, $k$ would not be an integer). Hence, given conditions (a) or (b), each $\langle H_{\balpha + \bbeta - \A - B}, H_{\bgamma} \rangle = 0$ in the expression above, so the triple product is zero. 

Now suppose that $|\bgamma| \le \oalpha + |\bbeta|$ for $\ogamma = \text{max}(\oalpha, \obeta, \ogamma)$, and $\oalpha + \obeta + \ogamma$ is even, so $k := \frac{\oalpha + |\bbeta| - |\bgamma|}{2}$ is a non-negative integer. The only surviving (that is, potentially nonzero) terms are the ones for which $|\A| = |\B| = k$. In this case, we then have 
\[ \langle H_{\balpha}, H_{\bbeta}, H_{\bgamma} \rangle =  \hspace{-10pt} \sum_{\substack{\A \thinspace \le \balpha \\ \B \le \bbeta \\ |\A| = |\B| = k}} \mbinom{\balpha}{\A} \mbinom{\bbeta}{\B} \langle H_{\A}, H_{\B} \rangle \langle H_{\bm{\alpha + \beta - \A - \B}}, H_{\bgamma} \rangle \] 
as desired. \end{proof}

\subsection{Recovering the Triple Product Formula for $n = 1$}\label{sect_trip_prod_n_1}
Recall that the triple product formula for $n = 1$ dimensional indices $i,j,\ell \in \N_0$ is 
\begin{align*}
\langle H_i, H_j, H_\ell \rangle = \begin{cases} 0 &\text{ if } i + j + \ell \text{ is odd, or } s < \text{max}(i,j,\ell), \\
\frac{i! j! \ell!}{(s - i)! (s-j)! (s-\ell)!} &\text{ else} \end{cases} 
\end{align*}
where $s := \frac{i + j + \ell}{2}$ \cite{polys:master}. 
We will recover this formula using Thm.~\ref{trip_prod}. For $i,j, \ell \in \N_0$, one of the indices must be the maximum, so without loss of generality, let $\ell = \text{max}(i,j,\ell).$ If $i + j + \ell$ is odd, then immediately the triple product is zero by condition (b) of Thm.~\ref{trip_prod}. Now suppose $i + j + \ell$ is even, and define the integer $s := \frac{i + j + \ell}{2}.$ Then $s < \text{max}(i,j,\ell) \Longleftrightarrow
\frac{i + j + \ell}{2} < \ell \Longleftrightarrow i + j < \ell = \text{max}(i,j,\ell);$ so by (a) of Thm.~\ref{trip_prod}, $\langle H_i, H_j, H_\ell \rangle = 0.$ 

Now consider when $i + j + \ell$ is even and $\text{max}(i,j,\ell) = \ell \le s = \frac{i + j + \ell}{2}$. Consistent with the notation in Thm.~\ref{trip_prod}, define the non-negative integer $k := \frac{i + j - \ell}{2} = s - \ell$. Then this theorem gives that

\begin{alignat*}{2}
\langle H_i, H_j, H_\ell \rangle &= \sum_{\substack{A \thinspace \le i \\ B \le j \\ |A| = |B| = k}} \mbinom{i}{A} \mbinom{j}{B} \langle H_{A}, H_{B} \rangle \langle H_{i + j - A - B}, H_{\ell} \rangle
\\
&= \mbinom{i}{s - \ell} \mbinom{j}{s - \ell} \langle H_{s - \ell}, H_{s - \ell} \rangle \langle H_{i + j - 2(s - \ell)}, H_\ell \rangle \hspace{15pt} & \text{[only term is $A = B = k$]} \\
&= \mbinom{i}{s - \ell} \mbinom{j}{s - \ell} (s - \ell)! \ell!  & \text{[since $i + j - 2(s - \ell) = \ell$]} \\
&= \frac{i! j!}{(s - \ell)!^2 (i - s + \ell)!  (j - s + \ell)!}  (s - \ell)! \ell!  \\
&= \frac{i! j! \ell!}{(s - \ell)! (i - s + \ell)! (j - s + \ell)!} \\
&= \frac{i! j! \ell!}{(s - \ell)! (s - j)! (s - i)!}
\end{alignat*}
where the last equality follows, since $i - s + \ell = s - j$ and $j -s + \ell = s - i$. This completes the calculation.


\begin{example}\label{ex_3} For $n = 3,$ let $\bm{\alpha} = (1,0,2), \bm{\beta} = (3,1,0),$ and $\bm{\gamma} = (1,2,2).$ Then $|\bm{\alpha}| = 3,$ $|\bm{\beta}| = 4,$ and $|\bm{\gamma}| = 5,$ so we cannot use conditions on size or parity to conclude the triple product is zero. In this case, $k := \frac{|\bm{\alpha}| + |\bm{\beta}| - |\bm{\gamma}|}{2} = 1.$

We need to find the $\bm{A} \le \bm{\alpha}$ and $\B \le \bbeta$ such that $|\bm{A}| = |\B| = 1$, so the options are $\A = \bm{e_1}, \bm{e_3}$ and $\B = \bm{e_1}, \bm{e_2}.$ The corresponding terms are computed in Table  \ref{tab_3}.

\begin{table}[H]
\caption{Computing the components of each term in the triple product formula given by Thm.~\ref{trip_prod} for $\bm{\alpha} = (1,0,2), \bm{\beta} = (3,1,0),$ and $\bm{\gamma} = (1,2,2).$}\label{tab_3}
\medskip
{\renewcommand{\arraystretch}{1.1}
\begin{center}
\begin{tabular}{|c|c|c|c|c|c|c|}
\hlineB{.5} & & & & & & \\[-12pt]
$\bm{A}$ & $\bm{B}$  & $\mbinom{\balpha}{\A}$ & $\mbinom{\bbeta}{\B} $ & $ \langle H_{\A}, H_{\B} \rangle$ & $ H_{\bm{\alpha + \beta - A - B}}$ & Final Term  \\[4pt] \hline 
$\bm{e_1}$ & $\bm{e_1}$ & 1 & 3 & $\Sigma_{11}^{-1}$ & $H_{(2,1,2)}$ & $3 \Sigma_{11}^{-1} \langle H_{(2,1,2)}, H_{\bm{\gamma}} \rangle$ \\[3pt] 
$\bm{e_1}$ & $\bm{e_2}$ & 1 & 1 & $\Sigma_{12}^{-1}$ & $H_{(3,0,2)}$ & $\Sigma_{12}^{-1} \langle H_{(3,0,2)}, H_{\bm{\gamma}} \rangle$ \\[3pt] 
$\bm{e_3}$ & $\bm{e_1}$ & 2 & 3 & $\Sigma_{13}^{-1}$ & $H_{(3,1,1)}$ & $6 \Sigma_{13}^{-1} \langle H_{(3,1,1)}, H_{\bm{\gamma}} \rangle$ \\[3pt] 
$\bm{e_3}$ & $\bm{e_2}$ & 2 & 1 & $\Sigma_{23}^{-1}$ & $H_{(4,0,1)}$ & $2 \Sigma_{23}^{-1} \langle H_{(4,0,1)}, H_{\bm{\gamma}} \rangle$ \\
\hlineB{.5}
\end{tabular}
\end{center}}
\end{table}

Therefore,
\begin{align*}
\hspace{-20pt}\langle H_{(1,0,2)}, H_{(3,1,0)}, H_{(1,2,2)} \rangle &=  \sum_{\substack{\A \le (1,0,2) \\ \B \le (3,1,0) \\ |\A| = |\B| = 1}} \mbinom{\balpha}{\A} \mbinom{\bbeta}{\B} \langle H_{\A}, H_{\B} \rangle \langle H_{\bm{\alpha + \beta - \A - \B}}, H_{\bgamma} \rangle \\
&= 3 \Sigma_{11}^{-1} \langle H_{(2,1,2)}, H_{(1,2,2)} \rangle + \Sigma_{12}^{-1} \langle H_{(3,0,2)}, H_{(1,2,2)} \rangle  \\
&+ \thinspace 6 \Sigma_{13}^{-1} \langle H_{(3,1,1)}, H_{(1,2,2)} \rangle + 2 \Sigma_{23}^{-1} \langle H_{(4,0,1)}, H_{(1,2,2)} \rangle. 
\end{align*}
\end{example}

\begin{example}\label{ex_mismatched_parity} For $n = 4,$ let $\bm{\alpha} = (1,1,1,1)$, $\bm{\beta} = (1,2,2,0)$, and $\bgamma = (1,1,2,2).$ Then $\oalpha = 4$, $\obeta = 5$, and $\ogamma = 6$, so $\oalpha + \obeta + \ogamma$ is odd. We conclude from condition (b) in Theorem \ref{trip_prod} that $\langle H_{(1,1,1,1)}, H_{(1,2,2,0)}, H_{(1,1,2,2)} \rangle = 0.$ 
\end{example}

\section{Conclusion}

Stochastic Galerkin schemes provide a framework for approximating solutions to differential equations when traditional sampling-based approaches are prohibilitely costly. In particular, they utilize an infinite functional basis of the parameter space to express the solution and derive a {deterministic} system of PDEs, which can then be solved with standard numerical techniques \cite{constantine_primer:master, UQHandbook_IntroUQ, Smith}. Broadly speaking, these methods construct a finite degree polynomial that globally approximates the solution $u(\bm{d}; \xi)$  over the parameter space; approximate statistics can then readily be computed from the constructed polynomial \cite{constantine_thesis:master}. Since Galerkin methods are based on projecting the residual onto the space of approximating polynomials, their accuracy is optimal in an $L^2$ sense \cite{Smith}. 

Given multiple correlated random parameters, the theory behind univariate polynomial chaos does not immediately generalize. In 2017, Rahman constructed a PCE with \emph{weakly orthogonal} basis functionals consistent with the Gaussian measure, which was shown to converge to the correct limit in mean-square, probability, and distribution  \cite{rahman:master}; we refer to this series expansion as the \emph{correlated multivariate PCE} (or CPCE). One major consequence of this weak orthogonality is that the \emph{$N$th products} of the form $\mathbb{E}_{\bxi}(H_{\balpha^{(1)}}(\bxi) \cdots H_{\balpha^{(N)}}(\bxi))$ that emerge during Galerkin projection no longer decompose into a multiplication of their one-dimensional counterparts. Often these expectation integrals over the $n$-dimensional parameter space are approximated numerically  --- and doing so is not trivial; common approaches might involve combination of sampling methods, standard Gaussian quadrature/collocation, and/or sparse grid techniques; 
Rahman first provided a formula for the \emph{double products} ($N = 2$) or \emph{second-order moments}, $\mathbb{E}_{\bxi}(H_{\balpha}(\bxi) H_{\bbeta}(\bxi)),$ that appear during Galerkin projection \cite{rahman:master}. The primary contribution of this paper is the derivation of an explicit formula for the third-order moments of multivariate Hermite polynomials (MHPs) with respect to correlated Gaussian distributions. To obtain this result, we establish several foundational properties of MHPs that characterize the structure of these moments and facilitate their exact evaluation. An important direction for future work is to investigate whether the formulas proved in this paper can be used in deriving an analogous formula for the $N$th order moments for an $N$ of arbitrary size.

\label{sec:conclusions}



\bibliographystyle{plain}
\bibliography{arxivpaperbib}

\end{document}